\let\old@tocline\@tocline
\let\section@tocline\@tocline
\newcommand{\subsection@dotsep}{4.5}
\newcommand{\subsubsection@dotsep}{4.5}
     \leaders\hbox{$\m@th
        \mkern \subsection@dotsep mu\hbox{.}\mkern \subsection@dotsep mu$}\hfill
\let\subsection@tocline\@tocline
\let\@tocline\old@tocline
     \leaders\hbox{$\m@th
        \mkern \subsubsection@dotsep mu\hbox{.}\mkern \subsubsection@dotsep mu$}\hfill
\let\subsubsection@tocline\@tocline
\let\@tocline\old@tocline
\let\old@l@subsection\l@subsection
\let\old@l@subsubsection\l@subsubsection
\def\@tocwriteb#1#2#3{%
  \begingroup
    \@xp\def\csname #2@tocline\endcsname##1##2##3##4##5##6{%
      \ifnum##1>\c@tocdepth
      \else \sbox\z@{##5\let\indentlabel\@tochangmeasure##6}\fi}%
    \csname l@#2\endcsname{#1{\csname#2name\endcsname}{\@secnumber}{}}%
  \endgroup
  \addcontentsline{toc}{#2}%
    {\protect#1{\csname#2name\endcsname}{\@secnumber}{#3}}}%
\newlength{\@tocsectionindent}
\newlength{\@tocsubsectionindent}
\newlength{\@tocsubsubsectionindent}
\newlength{\@tocsectionnumwidth}
\newlength{\@tocsubsectionnumwidth}
\newlength{\@tocsubsubsectionnumwidth}
\newcommand{\settocsectionnumwidth}[1]{\setlength{\@tocsectionnumwidth}{#1}}
\newcommand{\settocsubsectionnumwidth}[1]{\setlength{\@tocsubsectionnumwidth}{#1}}
\newcommand{\settocsubsubsectionnumwidth}[1]{\setlength{\@tocsubsubsectionnumwidth}{#1}}
\newcommand{\settocsectionindent}[1]{\setlength{\@tocsectionindent}{#1}}
\newcommand{\settocsubsectionindent}[1]{\setlength{\@tocsubsectionindent}{#1}}
\newcommand{\settocsubsubsectionindent}[1]{\setlength{\@tocsubsubsectionindent}{#1}}
\renewcommand{\l@section}{\section@tocline{1}{\@tocsectionvskip}{\@tocsectionindent}{\@tocsectionnumwidth}{\@tocsectionformat}}%
\renewcommand{\l@subsection}{\subsection@tocline{1}{\@tocsubsectionvskip}{\@tocsubsectionindent}{\@tocsubsectionnumwidth}{\@tocsubsectionformat}}%
\renewcommand{\l@subsubsection}{\subsubsection@tocline{1}{\@tocsubsubsectionvskip}{\@tocsubsubsectionindent}{\@tocsubsubsectionnumwidth}{\@tocsubsubsectionformat}}%
\newcommand{\@tocsectionformat}{}
\newcommand{\@tocsubsectionformat}{}
\newcommand{\@tocsubsubsectionformat}{}
\def\csname toc@1format\endcsname{\@tocsectionformat}
\def\csname toc@2format\endcsname{\@tocsubsectionformat}
\def\csname toc@3format\endcsname{\@tocsubsubsectionformat}
\newcommand{\settocsectionformat}[1]{\renewcommand{\@tocsectionformat}{#1}}
\newcommand{\settocsubsectionformat}[1]{\renewcommand{\@tocsubsectionformat}{#1}}
\newcommand{\settocsubsubsectionformat}[1]{\renewcommand{\@tocsubsubsectionformat}{#1}}
\newlength{\@tocsectionvskip}
\newcommand{\settocsectionvskip}[1]{\setlength{\@tocsectionvskip}{#1}}
\newlength{\@tocsubsectionvskip}
\newcommand{\settocsubsectionvskip}[1]{\setlength{\@tocsubsectionvskip}{#1}}
\newlength{\@tocsubsubsectionvskip}
\newcommand{\settocsubsubsectionvskip}[1]{\setlength{\@tocsubsubsectionvskip}{#1}}
\patchcmd{\tocsection}{\indentlabel}{\makebox[\@tocsectionnumwidth][l]}{}{}
\patchcmd{\tocsubsection}{\indentlabel}{\makebox[\@tocsubsectionnumwidth][l]}{}{}
\patchcmd{\tocsubsubsection}{\indentlabel}{\makebox[\@tocsubsubsectionnumwidth][l]}{}{}
\newcommand{\@sectypepnumformat}{}
\renewcommand{\contentsline}[1]{%
  \expandafter\let\expandafter\@sectypepnumformat\csname @toc#1pnumformat\endcsname%
  \csname l@#1\endcsname}
\newcommand{\@tocsectionpnumformat}{}
\newcommand{\@tocsubsectionpnumformat}{}
\newcommand{\@tocsubsubsectionpnumformat}{}
\newcommand{\setsectionpnumformat}[1]{\renewcommand{\@tocsectionpnumformat}{#1}}
\newcommand{\setsubsectionpnumformat}[1]{\renewcommand{\@tocsubsectionpnumformat}{#1}}
\newcommand{\setsubsubsectionpnumformat}[1]{\renewcommand{\@tocsubsubsectionpnumformat}{#1}}
\renewcommand{\@tocpagenum}[1]{%
  \hfill {\mdseries\@sectypepnumformat #1}}
\let\oldappendix\appendix
\renewcommand{\appendix}{%
  \leavevmode\oldappendix%
  \addtocontents{toc}{%
    \protect\settowidth{\protect\@tocsectionnumwidth}{\protect\@tocsectionformat\sectionname\space}%
    \protect\addtolength{\protect\@tocsectionnumwidth}{2em}}%
}
\let\oldtableofcontents\tableofcontents
\renewcommand{\tableofcontents}{%
  \vspace*{-5\linespacing}
  \oldtableofcontents}
\let\origsection=\section \def\section{\@ifstar{\origsection*}{\mysection}} 
\def\mysection{\@startsection{section}{1}\z@{.7\linespacing\@plus\linespacing}{.5\linespacing}{\normalfont\scshape\centering\S}}
\colorlet{darkishRed}{red!60!black}
\colorlet{darkishBlue}{blue!60!black}
\colorlet{darkishGreen}{green!50!black}
\colorlet{darkerishGreen}{green!30!black}
\colorlet{lightishGreen}{green!70!black}
\crefname{mainresult}{Theorem}{Theorems}
\let\setminus=\smallsetminus
\newcommand{\COMMENT}[1]{{}}
\let\setminus=\smallsetminus
\renewcommand{\leq}{\leqslant}
\renewcommand{\geq}{\geqslant}
\renewcommand{\ge}{\geq}
\let\rho=\varrho
\let\phi=\varphi
\newcommand{\id}{\normalfont\text{id}}
\newcommand{ \N } { \mathbb{N} }
\newcommand{ \Z } { \mathbb{Z} }
\newcommand{\defn}[1]{{\color{darkishGreen}{\emph{#1}}}}
\def\calCommandfactory#1{%
   \expandafter\def\csname c#1\endcsname{\mathcal{#1}}}
\def\frakCommandfactory#1{%
   \expandafter\def\csname frak#1\endcsname{\mathfrak{#1}}}
\newcounter{ctr}
  \edef\X{\@Alph\c@ctr}
  \edef\Y{\@alph\c@ctr}
\renewcommand{\cD}{\mathscr{D}}
\newtheorem{theorem}{Theorem}[section] 
\newtheorem{corollary}[theorem]{Corollary}
\newtheorem{lemma}[theorem]{Lemma}
\newtheorem{conjecture}[theorem]{Conjecture}
\crefname{claim}{Claim}{Claims}
\newenvironment{customthm}[1]
  {\innercustomthm}
  {\endinnercustomthm}
\theoremstyle{definition}
\newtheorem{example}[theorem]{Example}
\newtheorem*{definition*}{Definition}
\newtheorem{question}[theorem]{Question}
\theoremstyle{remark}
\newcommand{\pdfOrNot}[2]{\ifbool{pdfBool}{{#1}}{{#2}}}
\def\lqedsymbol{\ifmmode$\lrcorner$\else{\unskip\nobreak\hfil
		\penalty50\hskip1em\null\nobreak\hfil$\rule{1.2ex}{1.2ex}$
		\parfillskip=0pt\finalhyphendemerits=0\endgraf}\fi}
\newcommand{\td}{tree-decom\-pos\-ition}
\begin{document}

\title{Locally chordal graphs}

\author[T.\ Abrishami \and P.\ Knappe \and J.\ Kobler]{Tara Abrishami, Paul Knappe, Jonas Kobler}

\address{Stanford University, Department of Mathematics}
\email{tara.abrishami@stanford.edu}

\address{Universität Hamburg, Department of Mathematics}
\email{paul.knappe@uni-hamburg.de}

\thanks{T.A. was supported by the National Science Foundation Award Number DMS-2303251 and the Alexander von Humboldt Foundation. P.K. was supported by a doctoral scholarship of the Studienstiftung des deutschen Volkes.}

\keywords{Locally chordal, chordal, local separator, clique, local covering, cycle space}

\begin{abstract}
In this paper we study locally chordal graphs, i.e.\ graphs where every small-radius ball is chordal.
We prove four characterizations of locally chordal graphs.
Two are counterparts of the classic descriptions of chordal graphs via induced subgraphs and via minimal separators.
For the latter, we rely on the local separators introduced in \cite{computelocalSeps}.
Another characterization is via the local covering, which was introduced in \cite{canonicalGD} to study local-global characteristics of graphs using coverings from topology.
Our final characterization of locally chordal graphs is in terms of their binary cycle spaces. 
This gives a new characterization of chordal graphs as wheel-free graphs whose binary cycle space is generated by triangles.

Together, these results demonstrate the potential of local-global tools to uncover rich new properties. Our results in this paper also form the basis of our local-global analysis of locally chordal graphs \cite{localGlobalChordal}, where we develop a local-global perspective into structural characterizations.
\end{abstract}

\maketitle

\section{Introduction}
Chordal graphs are highly structured graphs which admit many equivalent characterizations, but their strong structure is a double-edged sword.
On one hand, their structure can be used to prove a variety of descriptive characterizations for chordal graphs, making them a well-known and fundamental graph class.
On the other hand, they are so structured that it is difficult for them to contribute to the forefront of new or inventive ideas in graph theory.  Our goal in this paper is to define a new class of graphs, \emph{locally chordal graphs}, which occupy the best of both worlds: they retain some structure that can be effectively leveraged, but they are broad enough to offer insight on the emerging research frontier of (structural) local-global studies.  

A graph is \defn{chordal}\footnote{In the literature, chordal graphs are sometimes also called \emph{rigid circuit} or \emph{triangulated} graphs.} if each of its cycles of length at least four has a chord. 
Let $r \geq 0$ be an integer, the given \emph{degree of locality}.
For a vertex $v$ of a graph $G$, we define the \defn{ball of radius $r/2$ around $v$}, denoted \defn{$B_{G}(v,r/2)$} or, for short, \defn{$B_{r/2}(v)$}, as the subgraph of $G$ given by the set of all vertices of distance at most $r/2$ from $v$ and the set of all edges of $G$ for which the sum of the endpoints' distances to $v$ is strictly less than $r$.\footnote{See \cref{sec:chordallocchordal} for a comparison to the \emph{depth-$d$ closed neighborhood}.}
In this paper, we study \defn{$r$-locally chordal} graphs: graphs whose balls of radius $r/2$ are chordal.
Our main result is that several of the well-known characterizing properties of chordal graphs are valid at any scale of locality, rather than just at their global extreme: 

\begin{restatable}{mainresult}{mainthm}\label{BasicCharacterization}
    Let $G$ be a (possibly infinite) graph and let $r \geq 3$ be an integer. The following are equivalent:
    \begin{enumerate}
        \item \label{basic:i} $G$ is $r$-locally chordal.
        \item\label{basic:ii} $G$ is $r$-chordal and wheel-free.
        \item \label{basic:iii} The $r$-local cover $G_r$ of $G$ is chordal.
        \item\label{basic:iv} Every minimal $r$-local separator of $G$ is a clique. 
    \end{enumerate}
\end{restatable}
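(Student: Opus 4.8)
The plan is to prove the four conditions equivalent by establishing the combinatorial equivalence (i)$\Leftrightarrow$(ii) directly and then routing the two ``external'' descriptions through (ii): the covering description (iii) via the $r$-local cover, and the separator description (iv) via the classical chordal separator theorem applied to that cover. So I would prove the chain (i)$\Leftrightarrow$(ii)$\Leftrightarrow$(iii)$\Leftrightarrow$(iv).

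For (i)$\Rightarrow$(ii), wheel-freeness is immediate: the rim of a wheel is a chordless cycle of length at least four lying inside $B_{r/2}(v)$ for $v$ the hub, since every rim edge and spoke has endpoint-distance-sum at most $2<r$, contradicting chordality of that ball; and $r$-chordality follows because any short cycle embeds into the ball of radius $r/2$ centred at one of its vertices, where local chordality supplies a chord. The substantial direction is (ii)$\Rightarrow$(i). Here I would take a chordless cycle $C\subseteq B_{r/2}(v)$ of length at least four and derive a contradiction. If $C$ is short, $r$-chordality already yields a chord, although one must check that the chord lies in the ball, i.e.\ that its endpoints have distance-sum $<r$; this is where the precise ball convention is used, and it is exactly the delicate point since chords of ball-cycles can a priori be omitted when both endpoints sit at distance $r/2$. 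If $C$ is long, I would exploit the centre $v$ together with the shortest-path structure from $v$ to $C$: the four-cycles formed by $v$, consecutive vertices of $C$, and intermediate shells are chorded by $r$-chordality, and iterating this ``inward'' either forces a chord of $C$ itself or exhibits a vertex adjacent to a chordless cycle, that is, a wheel, contradicting wheel-freeness. Making this inward push uniform in the length of $C$ and simultaneously controlling the omitted edges of the ball is the main obstacle.

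For (ii)$\Leftrightarrow$(iii), I would use that the covering map $p\colon G_r\to G$ of \cite{canonicalGD} restricts to an isomorphism on every ball of radius $r/2$. As wheels and short cycles are radius-one and radius-at-most-$r/2$ phenomena, this local isomorphism transports ``$r$-chordal and wheel-free'' between $G$ and $G_r$ in both directions. It then remains to show that for the cover this property is equivalent to genuine chordality, i.e.\ $G_r$ is $r$-chordal and wheel-free if and only if $G_r$ is chordal. The converse is trivial, since chordal graphs are $r$-chordal and wheel-free. The forward implication is the point: the cover unwinds all long cycles, so every induced cycle of $G_r$ has length at most $r$; as $r$-chordality forbids induced cycles of length between four and $r$, no induced cycle of length four or more survives, whence $G_r$ is chordal.

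For (iii)$\Leftrightarrow$(iv), I would apply the classical theorem that a graph is chordal if and only if each of its minimal separators is a clique to the graph $G_r$, and then translate across the cover. By \cite{computelocalSeps} the minimal separators of $G_r$ correspond to the minimal $r$-local separators of $G$, and since $p$ is a local isomorphism a separator is a clique in $G_r$ exactly when its image is a clique in $G$. Hence every minimal separator of $G_r$ is a clique if and only if every minimal $r$-local separator of $G$ is a clique, giving (iii)$\Leftrightarrow$(iv). Because $G$ is allowed to be infinite, the only extra care here is to invoke the minimal-separator characterisation of chordality in a form valid for infinite graphs.
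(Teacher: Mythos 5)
Your proposal breaks down at its two load-bearing steps. First, in (ii)$\Rightarrow$(iii) you assert that ``the cover unwinds all long cycles, so every induced cycle of $G_r$ has length at most $r$.'' This is false: the defining property of the $r$-local cover is only that its \emph{binary cycle space} is generated by cycles of length at most $r$ (\cref{lem:BinaryCycleSpaceGenByShortCycles}), which is far weaker than having no long induced cycles. A concrete counterexample to your claim is $G=W_n$ with $n>r$: this graph is $r$-chordal, its cycle space is generated by triangles so $G_r\cong G$, yet $G_r$ contains an induced cycle of length $n>r$. Of course $W_n$ is not wheel-free, but your argument at this point makes no use of wheel-freeness beyond transporting it to $G_r$, so it proves too much. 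Closing this gap is exactly the content of \cref{thm:chordalIFF-CSgenbyTriangles-No-Induced-wheels}: one must show that in a wheel-free graph, a long chordless cycle cannot be a symmetric difference of triangles, and the argument there (minimizing the generating set of triangles, extracting a cycle in the link of an off-cycle vertex $c$, and producing a wheel with hub $c$) is where wheel-freeness does real work. Relatedly, your fallback direct proof of (ii)$\Rightarrow$(i) by ``pushing inward'' is only a plan with an admitted unresolved obstacle, and your route (iii)$\Rightarrow$(i) tacitly needs that balls of a chordal graph are chordal, which for even $r$ is not about induced subgraphs and requires an argument (\cref{lem:ChordalIsLocallyChordalForCliques}, proved via Dirac's theorem on minimal separators).

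Second, for (iii)$\Leftrightarrow$(iv) you invoke a correspondence ``by \cite{computelocalSeps}'' between minimal separators of $G_r$ and minimal $r$-local separators of $G$. No such result exists there: the notion of a \emph{minimal} $r$-local separator is introduced in the present paper, and establishing a usable relationship between local separators downstairs and separators upstairs is precisely the work to be done. The paper instead argues both directions concretely: (iv)$\Rightarrow$(ii) by exhibiting, for any induced short cycle or wheel, a minimal $r$-local separator containing two non-adjacent vertices; and (iii)$\Rightarrow$(iv) by showing via \cref{lem:LocalSeparatorLocallySeparates} that a minimal $r$-local $u$--$w$ separator $X$ restricts to a $u$--$w$ separator of $B_{r/2}(v)$, taking a minimal separator $Y$ of that ball (a clique by Dirac), and then lifting $Y$ to the chordal cover $G_r$ to verify that $Y$ is itself an $r$-local $u$--$w$ separator, whence $X\cap N^{r/2}[v]=Y$ by minimality. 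Your proposal replaces this entire argument with an unproved assertion, so as written both equivalences (ii)$\Leftrightarrow$(iii) and (iii)$\Leftrightarrow$(iv) are unsubstantiated.
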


These characterizations form the basis of our structural characterizations of locally chordal graphs in \cite{localGlobalChordal}, where we generalize the characterization of chordal graphs by \td s into cliques (equivalently: subtree intersection graphs).

Next, we explain \cref{basic:i}, \cref{basic:ii}, \cref{basic:iii}, and \cref{basic:iv}, and how they relate to descriptions of chordal graphs, in more detail.

\subsection{When the degree of locality goes to infinity}
Since trees are chordal and in every graph every ball of radius $r/2$ for $r \leq 2$ is a tree (specifically, a star), every graph is $0$-, $1$-, and $2$-locally chordal. From the definition, every graph which is $r$-locally chordal is $r'$-locally chordal for all $0 \leq r' \leq r$. Moreover, the graphs which are $r$-locally chordal for every $r \in \N$ are exactly the chordal graphs. Therefore, the sequence of $r$-locally chordal graphs for $r \in \N$ is an inclusion-wise decreasing sequence of graph classes, starting from the class of all graphs and ending with the class of chordal graphs as its limit. 
Next we discuss the characterizations \cref{basic:ii}, \cref{basic:iii} and \cref{basic:iv} of $r$-locally chordal graphs for $r \geq 3$.

\subsection{Characterization via induced subgraphs} 
The definition of chordal graphs can be easily understood via induced subgraphs: a graph is chordal if and only if it does not contain an induced cycle of length at least four. A graph is \defn{$r$-chordal} if each of its cycles of length at least 4 and at most $r$ has a chord. Every cycle of length at most $k \leq r$ in $G$ appears in $B_G(v, r/2)$ for each of its vertices $v$, so every $r$-locally chordal graph is $r$-chordal. 
A natural first guess is that the converse holds as well: that a graph is $r$-locally chordal if and only if it is $r$-chordal. 

However, long induced cycles can appear in a small-radius ball around a vertex even in an $r$-chordal graph. The simplest examples for this are wheels.
A \defn{wheel $W_n$} for $n \geq 4$ consists of a cycle of length $n$, its \emph{rim}, and a vertex $v$ complete to the rim, its \emph{hub}.
Every wheel is contained in the 3/2-ball around its hub, so every graph that contains a wheel as an induced subgraph is not $r$-locally chordal for any $r \geq 3$.
Therefore, every $r$-locally chordal graph is not only $r$-chordal but also \defn{wheel-free}, i.e.\ it does not contain a wheel $W_n$ with $n \geq 4$ as an induced subgraph. 
Surprisingly, \cref{BasicCharacterization}~\cref{basic:i}$\leftrightarrow$\cref{basic:ii} yields that these two necessary conditions are in fact sufficient: a graph is $r$-locally chordal if and only if it is $r$-chordal and wheel-free. 

\subsection{Characterization via coverings}

Recently, Diestel, Jacobs, Knappe and Kurkofka~\cite{canonicalGD} introduced a new framework to discriminate between the local structure of a graph and its global structure. 
Given a (possibly infinite) graph $G$ and a parameter $r \geq 0$, they construct a (possibly infinite) graph $G_r$, called the {\em $r$-local cover of $G$}, that represents only the local structure of $G$. 
Note that even if $G$ is finite, $G_r$ might be infinite.
The \emph{$r$-local covering $p_r \colon G_r \to G$} is defined in \cite[\S 4]{canonicalGD} via the characteristic subgroup of the fundamental group of $G$. 
Equivalently, one may describe the $r$-local covering of $G$ as the covering of $G$ that is universal among those that preserve the $r/2$-balls \cite[Lemma 4.2, 4.3 \& 4.4]{canonicalGD} 
(see \cref{sec:background:loc-cov} for a formal definition).

Since $p_r$ preserves $r/2$-balls, every $r/2$-ball of $G$ appears as an $r/2$-ball of $G_r$ (possibly more than once). 
Therefore, if the $r$-local cover $G_r$ of a graph $G$ is chordal, then $G$ is $r$-locally chordal.  
\cref{BasicCharacterization}~\cref{basic:i}$\leftrightarrow$\cref{basic:iii} yields that the converse is also true: 
for every integer $r \geq 0$,\footnote{For $r = 0,1,2$, this is trivially true, as not only is every graph is $r$-locally chordal, but it is also easy to see that the corresponding $r$-local cover $G_r$ is the universal cover of $G$, which is well-known to be a forest and thus chordal.} a graph $G$ is $r$-locally chordal if and only if $G_r$ is chordal.
This characterization ties locally chordal graphs to their local cover. 
Our results can thus provide a possible roadmap for studying locally structured graphs via local coverings in other settings.

\subsection{Algebraic characterization}
Our study of locally chordal graphs relies on the $r$-local covering, introduced in \cite{canonicalGD}, to represent the local structure of a graph.
This covering opens up a perspective on locally chordal graphs using tools from algebraic topology.
Following this, we find an algebraic characterization of locally chordal graphs in the class of wheel-free graphs via properties of their binary cycle spaces, i.e.\ their first homology group with coefficients in $\Z / {2\Z}$.
For readers not familiar with the binary cycle space, we include an equivalent formulation using symmetric differences of the edge sets of certain cycles.

\begin{restatable}{mainresult}{algLocChordal}\label{thm:chordalIFF-CSgenbyTriangles-No-Induced-wheels}
    Let $G$ be a (possibly infinite) wheel-free graph and let $r$ be a positive integer. 
    Then $G$ is $r$-chordal if and only if the cycles of $G$ of length at most $r$ in $G$ are generated by the triangles in the binary cycle space of $G$ (equivalently: each cycle of $G$ of length at most $r$ is the symmetric difference of finitely many triangles).
\end{restatable}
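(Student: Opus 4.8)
The plan is to prove the two implications separately; only the backward implication will use wheel-freeness.

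For the forward implication, assume $G$ is $r$-chordal and let $C$ be any cycle of length at most $r$. I argue by induction on the length $|C|$ that $C$ is a symmetric difference of triangles. If $|C| = 3$ then $C$ is itself a triangle. If $|C| \geq 4$ then, since $4 \le |C| \le r$ and $G$ is $r$-chordal, $C$ has a chord $e = xy$, which splits $C$ into two cycles $C_1, C_2$ with $C = C_1 \triangle C_2$, each of length strictly less than $|C|$ and hence at most $r$. By the induction hypothesis each $C_i$ is a symmetric difference of triangles, and therefore so is $C = C_1 \triangle C_2$. Note that this direction does not use that $G$ is wheel-free.

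For the backward implication I argue by contraposition: assuming $G$ is wheel-free but not $r$-chordal, I produce a cycle of length at most $r$ that is \emph{not} a symmetric difference of triangles, contradicting the hypothesis. Let $C = v_1 \cdots v_k$ be a chordless cycle with $4 \le k \le r$ of minimum length $k$, so that $G$ has no chordless cycle whose length lies in $[4, k-1]$, and suppose for contradiction that $C = T_1 \triangle \cdots \triangle T_m$ with each $T_j$ a triangle. The first observation is that a triangle meets a chordless cycle in at most one edge, since two edges would share a vertex $v_i$ and force the chord $v_{i-1}v_{i+1}$ (resp.\ $v_iv_{i+2}$); in particular the edge $v_1v_2$ lies in some $T_j$, whose apex $w$ satisfies $w \notin V(C)$ (a third vertex on $C$ would again create a chord). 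I then examine $N_C(w)$, the neighbours of $w$ on $C$. If $w$ is complete to $C$, then $G[V(C) \cup \{w\}]$ is a wheel $W_k$, contradicting wheel-freeness. Otherwise the neighbours of $w$ cut $C$ into arcs whose interiors avoid $N_C(w)$; any such arc of length $\ell$ with $2 \le \ell \le k-2$ together with $w$ forms a chordless cycle of length $\ell + 2$, and if moreover $\ell \le k - 3$ this is a chordless cycle of length in $[4, k-1]$, contradicting the minimality of $k$.

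The remaining, and genuinely hard, case is when every such apex $w$ is \emph{resistant}, meaning $N_C(w)$ is a run of only two or three consecutive vertices of $C$, so that all its gaps have length $1$, $k-2$, or $k-1$ and neither easy reduction applies. Here one must use the global structure of the representation. Since $C$ contains no edge $v_iw$, each ``leg'' $v_iw$ of a boundary triangle occurs in an even number of the $T_j$ and so is cancelled by a further triangle whose apex extends the configuration outward; iterating, the triangles incident to $C$ assemble into a \emph{finite} triangulated patch filling $C$ (finite because the symmetric difference is finite). The plan is to analyse an innermost piece of this patch: a minimal filled subregion bounded by a chordless cycle must, lacking any chord of $C$, contain an interior vertex adjacent to all vertices of that cycle, giving a wheel. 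Making the notions of ``patch'', ``innermost'', and ``interior vertex'' precise over $\Z/2\Z$ — where a null-homologous cycle need not literally bound a disc — is the crux of the argument, and I expect this to be the main obstacle and the place where wheel-freeness does its real work. As an alternative route one can instead invoke \cref{BasicCharacterization}: for wheel-free $G$ the goal ``$r$-chordal'' is equivalent to ``$G_r$ chordal'', so one may try to transport the cycle-space hypothesis along the $r/2$-ball-preserving covering $p_r\colon G_r \to G$ and reduce to the classical fact that chordal graphs have their cycle space generated by triangles.
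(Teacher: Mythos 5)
Your forward direction is fine and coincides with the paper's \cref{lem:CyclesGenByTriangles}. The problem is the backward direction: you correctly push the ``apex off the cycle'' analysis as far as it goes, but the case you yourself isolate --- where every apex $w$ sees only two or three consecutive vertices of $C$, so that the gaps have length $1$, $k-2$ or $k-1$ and yield no shorter chordless cycle and no wheel --- is exactly the case that carries all the difficulty, and your proposed resolution (assemble the triangles into a ``triangulated patch'' and analyse an innermost region) is not a proof but a plan whose central notions you admit you cannot make precise over $\Z/2\Z$. So there is a genuine gap, and it is located precisely where you predicted. The alternative route via \cref{BasicCharacterization} is not available either: in the paper the implication from ``$r$-chordal and wheel-free'' to ``$G_r$ chordal'' is itself deduced from the present theorem (via \cref{thm:CharOfLocCoverChordal}), so invoking it here would be circular.

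The paper closes this gap with a different choice of minimal object. Instead of taking $C$ of minimal \emph{length}, it takes a cycle $O$ of length in $[4,r]$ together with a \emph{smallest} finite set $\cT$ of triangles whose symmetric difference is $O$, and assumes no triangle of $\cT$ has all three vertices on $O$. Picking an apex $c \notin V(O)$ of some triangle in $\cT$, one looks at the ``link'' of $c$: the graph $H$ on the neighbours $v_1,\dots,v_q$ of $c$ (along edges of triangles in $\cT$) whose edges are the opposite sides of the triangles of $\cT$ through $c$. Since $c$ lies on no edge of $O$, each edge $cv_i$ must cancel in the symmetric difference, so it lies in at least two triangles of $\cT$; hence $H$ has minimum degree at least $2$ and contains a cycle $C=v_1\cdots v_\ell v_1$, which is the symmetric difference of the $\ell$ triangles $cv_iv_{i+1}\in\cT$. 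Wheel-freeness now enters exactly once: if $G[V(C)]$ had an induced cycle of length at least $4$, adding $c$ would give a wheel; so $G[V(C)]$ is chordal, and \cref{lem:CyclesGenByTriangles} retriangulates $C$ with only $\ell-2$ triangles avoiding $c$, producing a strictly smaller generating set and contradicting the minimality of $\cT$. This is the idea your sketch is missing: minimality of the \emph{representation}, not of the cycle, is what lets one trade the local ``fan'' at an off-cycle vertex for fewer triangles, and it avoids any appeal to a disc-filling or planarity picture. If you want to salvage your write-up, replace your minimal-length chordless cycle with a minimal triangle representation and run the link argument; your arc analysis then becomes unnecessary.
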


Since a graph is chordal if and only if it is wheel-free and $r$-chordal for every $r \in \N$,
an immediate consequence is an algebraic characterization of chordal graphs, which to our knowledge was previously unkown:

\begin{restatable}{mainresult}{algChordal}\label{cor:char-chordal-via-wheelfree-cyclespace}
A (possibly infinite) graph $G$ is chordal if and only if $G$ is wheel-free and its binary cycle space is generated by its triangles (equivalently: each of its cycles is a symmetric difference of finitely many triangles). 
\end{restatable}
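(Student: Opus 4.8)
The plan is to deduce the corollary directly from \cref{thm:chordalIFF-CSgenbyTriangles-No-Induced-wheels} by letting the degree of locality range over all positive integers. The bridge is the elementary observation that a graph $G$ is chordal if and only if it is $r$-chordal for every $r \in \N$: chordality asks that every cycle of length at least four has a chord, and since every cycle has finite length, this is equivalent to the same requirement restricted to cycles of length at most $r$, simultaneously for all $r$. I would record this equivalence first, together with the fact that chordality forces wheel-freeness: the rim of a wheel $W_n$ with $n \geq 4$ is an induced cycle of length at least four, which obstructs chordality.

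For the forward implication, assume $G$ is chordal. Then $G$ is wheel-free by the previous remark, and $G$ is $r$-chordal for every $r$. Fixing $r$ and applying \cref{thm:chordalIFF-CSgenbyTriangles-No-Induced-wheels}, the cycles of length at most $r$ are generated by the triangles; equivalently, each such cycle is a symmetric difference of finitely many triangles. Since an arbitrary cycle $C$ of $G$ has some finite length $\ell$, applying this with $r = \ell$ shows that $C$ is a symmetric difference of finitely many triangles. As the binary cycle space is generated by the (finite) cycles of $G$, and each of these is itself a finite sum of triangles, the whole cycle space is generated by the triangles.

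For the converse, assume $G$ is wheel-free and each of its cycles is a symmetric difference of finitely many triangles. Fix $r \in \N$. In particular every cycle of length at most $r$ is a finite symmetric difference of triangles, so the cycles of length at most $r$ lie in the span of the triangles; by \cref{thm:chordalIFF-CSgenbyTriangles-No-Induced-wheels}, $G$ is $r$-chordal. As $r$ was arbitrary, $G$ is $r$-chordal for every $r \in \N$, and therefore chordal.

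I expect the proof to be essentially bookkeeping on top of \cref{thm:chordalIFF-CSgenbyTriangles-No-Induced-wheels}, with no substantial obstacle. The only points requiring care are the quantifier interchange over $r$ — each individual cycle must be captured at some finite scale, which is automatic since cycles are finite — and, in the infinite setting, the precise meaning of \emph{generated by triangles}, which we read throughout as \enquote{every element of the binary cycle space is a finite sum of triangles}, matching the finitary formulation via finite symmetric differences supplied in the statement.
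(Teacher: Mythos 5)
Your proposal is correct and is essentially the paper's own argument: the paper derives this as an immediate corollary of \cref{thm:chordalIFF-CSgenbyTriangles-No-Induced-wheels} using exactly the observation that a graph is chordal if and only if it is wheel-free and $r$-chordal for every $r \in \N$, with the quantifier exchange over $r$ handled just as you describe. Nothing further is needed.
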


\subsection{Characterization via minimal local separators}
\emph{Local components} at a vertex-subset $X$ of a graph $G$ were introduced in \cite{computelocalSeps} as a local analog to \emph{componental cuts}, the sets of edges between a component of $G-X$ and $X$.
Based on this notion, they introduced \emph{$r$-local separators}, a local analog of separators.
Intuitively, a local separator is a vertex set whose incident edges can be partitioned in at least two classes so that there is no {\em short} walk from one class to another (see \cref{sec:CharViaMinLocSep} for the formal definition). 
When $r$ goes to infinity, these become the usual separators.

Dirac~\cite[Theorem~1]{dirac1961rigid} characterized chordal graphs by the structure of their minimal separators: a graph is chordal if and only if all of its minimal separators are cliques. In this paper, we introduce \emph{minimal local separators} as a local analog to minimal separators.
\cref{BasicCharacterization}~\cref{basic:i}$\leftrightarrow$\cref{basic:iv} yields that locally chordal graphs are precisely the graphs whose minimal local separators are all cliques. 

\subsection{How this paper is organized}

In \cref{sec:CharactizationIndSubCov}, we prove the characterizations of locally chordal graphs via induced subgraphs and via the $r$-local cover, i.e.\ that statements \cref{basic:i}, \cref{basic:ii} and \cref{basic:iii} in \cref{BasicCharacterization} are equivalent.
The characterizations in terms of the binary cycle space, i.e.\ \cref{thm:chordalIFF-CSgenbyTriangles-No-Induced-wheels} and \cref{cor:char-chordal-via-wheelfree-cyclespace}, are proven in \cref{subsec:binarycyclespace}.
Then we prove in \cref{sec:CharViaMinLocSep} the characterization via minimal local separators, i.e.\ the equivalence of statements \cref{basic:i} and \cref{basic:iv} of \cref{BasicCharacterization}.
The background required for each proof is included in the subsections immediately preceding the proof.

\subsection{Acknowledgements}
We thank Raphael W. Jacobs for his involvement in the early stages of this project, and also for his feedback on a draft of this paper. 
We thank Jacob Stegemann for his input to the proof of \cref{thm:chordalIFF-CSgenbyTriangles-No-Induced-wheels}. 
We thank Sandra Albrechtsen for her first proof of \cref{BasicCharacterization}~\cref{basic:i}$\to$\cref{basic:iv} via the results from \cite{localGlobalChordal}. 
We thank Reinhard Diestel for his thorough input regarding the presentation of the results.

\subsection{Notation \texorpdfstring{\&}{and} conventions}

In this paper we deal with simple graphs. 
We specify when we deal with finite graphs or with general (possibly infinite) graphs. 
We follow the graph theory notation and definitions from \cite{bibel} and the topology terminology from \cite{Hatcher}. 

\section{Characterization via induced subgraphs and via coverings}\label{sec:CharactizationIndSubCov}

In this section, we prove the characterizations of locally chordal graphs via induced subgraphs and via the $r$-local cover, i.e.\ that statements \cref{basic:i}, \cref{basic:ii} and \cref{basic:iii} in \cref{BasicCharacterization} are equivalent, which we restate here for convenience:

\begin{customthm}{\cref*{BasicCharacterization}}\label{thm:FIRSTTHREE}
        Let $G$ be a (possibly infinite) graph and let $r \geq 3$ be an integer. The following are equivalent:
    \begin{enumerate}
        \item \label{basic:i:sec} $G$ is $r$-locally chordal.
        \item\label{basic:ii:sec} $G$ is $r$-chordal and wheel-free.
        \item \label{basic:iii:sec} The $r$-local cover $G_r$ of $G$ is chordal.
    \end{enumerate}
\end{customthm}

We now prove \hyperref[proof:FIRSTTHREE:iToii]{\cref*{thm:FIRSTTHREE}~\cref*{basic:i:sec}$\rightarrow$\cref*{basic:ii:sec}}. 
Then we complete the proof of the equivalences of \cref{thm:FIRSTTHREE}~\cref{basic:i:sec}, \cref{basic:ii:sec} and \cref{basic:iii:sec} in the remainder of this section: we show \hyperref[proof:FIRSTTHREE:iiiToi]{\cref*{basic:iii:sec}$\rightarrow$\cref*{basic:i:sec}} in \cref{sec:chordallocchordal} and \hyperref[proof:FIRSTTHREE:iiToiii]{\cref*{basic:ii:sec}$\rightarrow$\cref*{basic:iii:sec}} in \cref{subsec:binarycyclespace}.

\begin{proof}[Proof of \cref{thm:FIRSTTHREE}~\cref{basic:i:sec}$\rightarrow$\cref{basic:ii:sec}]
    \phantomsection\label{proof:FIRSTTHREE:iToii}
    Assume that \cref{basic:i:sec} holds.
    Any cycle in $G$ of length at most $r$ is contained in the $r/2$-ball around any of its vertices and any wheel in $G$ is contained in the $3/2$-ball around its hub.
    Since $r \geq 3$ and the $r/2$-balls of $G$ are chordal by~\cref{basic:i}, neither a cycle of length at most $r$ nor a wheel $W_n$ with $n \geq 4$ can be an induced subgraph of $G$. 
    Thus, $G$ is $r$-chordal and wheel-free, i.e.\ \cref{basic:ii:sec} holds.
\end{proof}

\subsection{Background: Minimal separators}\label{subsec:separators}
Let $u,v$ be two vertices and $X$ a vertex-subset of a graph $G$.
We say that $X$ \defn{separates $u$ and $v$} in $G$, and call $X$ a \defn{$u$--$v$ separator} of $G$, if $X \subseteq V(G) \setminus \{u,v\}$ and every $u$--$v$ path in $G$ meets $X$.
If $X$ separates some two vertices in $G$, then~$X$ is a \defn{separator} of $G$.
A separator $X$ is a \defn{minimal (vertex) separator} of $G$ if there are two vertices $u, v$ of $G$ such that $X$ is an inclusion-minimal $u$--$v$ separator in $G$. 

Dirac characterized chordal graphs  by the structure of their minimal separators: 

\begin{restatable}[Dirac~\cite{dirac1961rigid}, {Theorem~1}]{theorem}{tightsepchar}
    \label{thm:CharacterisationChordalViaMinimalVertexSeparator} 
    A (possibly infinite)\footnote{It is immediate from the proof presented in \cite[Theorem~2.1]{blair1993introduction} that this also holds for infinite graphs.} graph is chordal if and only if  all its minimal separators are cliques.
\end{restatable}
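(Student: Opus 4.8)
The plan is to prove both implications directly: the forward direction exploits the two ``full'' sides cut out by a minimal separator, while the converse runs a single well-chosen minimal separator across a hypothetical induced cycle.

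For the forward implication, assume $G$ is chordal and let $S$ be a minimal $u$--$v$ separator; write $C_u, C_v$ for the components of $G - S$ containing $u$ and $v$. The first observation is that $S$ is \emph{full}: by minimality, no proper subset of $S$ separates $u$ from $v$, so each $s \in S$ has a neighbour in $C_u$ and a neighbour in $C_v$ (else $S \setminus \{s\}$ would still separate). Fix distinct $x, y \in S$; I want $xy \in E(G)$. Using that $x$ and $y$ both have neighbours in the connected sets $C_u$ and $C_v$, I choose a shortest $x$--$y$ path $P$ whose interior lies in $C_u$ and a shortest such path $Q$ with interior in $C_v$. Then $P \cup Q$ is a cycle, since its two interiors lie in distinct components of $G - S$ and hence span no edges between them. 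Being shortest, $P$ and $Q$ are induced, so the only possible chord of this cycle is $xy$ itself; were $x$ and $y$ non-adjacent, the cycle would have length at least $4$ and be chordless, contradicting chordality. Thus $xy \in E(G)$ and $S$ is a clique.

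For the converse, assume every minimal separator is a clique and suppose towards a contradiction that $G$ has an induced cycle $C = v_1 v_2 \cdots v_n$ with $n \geq 4$. Pick the non-adjacent pair $v_1, v_3$ and let $S$ be a minimal $v_1$--$v_3$ separator. The two arcs of $C$ from $v_1$ to $v_3$ are $v_1$--$v_3$ paths, so $S$ meets the interior of each: the short arc forces $v_2 \in S$, and the long arc forces $v_j \in S$ for some $j \in \{4, \dots, n\}$. Since $S$ is a clique, $v_2 v_j \in E(G)$; but $v_2$ and $v_j$ are non-adjacent on $C$, so this is a chord, contradicting that $C$ is induced. Hence $G$ has no induced cycle of length $\geq 4$ and is chordal.

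The genuinely delicate point is the infinite setting, which enters in the converse through the existence of a minimal $v_1$--$v_3$ separator. Starting from the $v_1$--$v_3$ separator $N(v_1)$ and applying Zorn's Lemma to the separators contained in it, ordered by reverse inclusion, I would verify that the intersection of a chain of $v_1$--$v_3$ separators is again one: any $v_1$--$v_3$ path is finite, so if it avoids the intersection it already avoids a single member of the chain, which is impossible. In the forward direction the corresponding objects (the finite cycle $P \cup Q$ and the fullness of $S$) survive verbatim for infinite $G$, so I expect the Zorn's Lemma step guaranteeing minimal separators, rather than the combinatorial core, to be the one requiring real care.
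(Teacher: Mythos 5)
The paper does not prove this statement; it imports it from Dirac (with a footnote pointing to Blair--Peyton's proof for the observation that it extends to infinite graphs). Your argument is exactly that classical proof --- fullness of a minimal separator giving the two internally disjoint $x$--$y$ paths through $C_u$ and $C_v$ for the forward direction, and a minimal $v_1$--$v_3$ separator forced to contain a non-adjacent pair of an induced cycle for the converse --- and it is correct, including the Zorn's Lemma step (intersections of chains of separators remain separators because paths are finite) that justifies the existence of minimal separators in infinite graphs, which is precisely the point the paper's footnote waves at.
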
 

\subsection{Chordal graphs are locally chordal}\label{sec:chordallocchordal}

In this section, we show that chordal graphs are $r$-locally chordal for any integer $r \geq 3$. More generally, we prove that the balls around connected vertex-subsets in chordal graphs are chordal.
Given a vertex-subset $X$ in a graph $G$ and an integer $r \geq 0$, 
the \defn{ball of radius $r/2$ around $X$} in $G$, denoted by \defn{$B_G(X,r/2)$} or, for short, \defn{$B_{r/2}(X)$}, is $\bigcup_{v \in X} B_{r/2}(v)$.

\begin{theorem}\label{lem:ChordalIsLocallyChordalForCliques}
    Let $G$ be a (possibly infinite) chordal graph and let $r \ge 3$ be an integer.
    If $G$ is chordal, then~$B_{r/2}(X)$ is chordal for any connected vertex-subset $X$ of $G$.
\end{theorem}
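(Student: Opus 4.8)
The plan is to argue by contradiction. Suppose $H := B_{r/2}(X)$ contains a cycle $C = v_0 v_1 \cdots v_{k-1} v_0$ with $k \ge 4$ that has no chord in $H$; I will produce a chord of $C$ that does lie in $H$. Throughout write $f(w) := d(w,X)$ for the distance of $w$ to $X$, and, since $X$ is connected, also use the sharper quantity $g(a,b) := \min_{w \in X}\bigl(d(a,w) + d(b,w)\bigr)$, which governs edge membership: an edge $\{a,b\}$ of $G$ lies in $H$ precisely when $g(a,b) < r$. Two elementary observations set everything up. First, each cycle edge $\{v_i,v_{i+1}\}$ lies in $H$, so $f(v_i) + f(v_{i+1}) \le g(v_i,v_{i+1}) \le r-1$, and every vertex of $H$ has $f \le \lfloor r/2\rfloor$. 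Second, because $C$ is a cycle of length at least four in the chordal graph $G$, repeatedly applying chordality shows that the chords of $C$ present in $G$ triangulate the polygon bounded by $C$; as $C$ is chordless in $H$, each such chord $\{v_i,v_j\}$ satisfies $g(v_i,v_j) \ge r$.

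The heart of the argument is to squeeze these inequalities at the vertices farthest from $X$. Consider first the model case $X = \{x\}$, where $g(a,b) = f(a)+f(b)$ exactly. Then a chord avoiding $H$ has $f(v_i)+f(v_j) \ge r$, while each endpoint has $f \le \lfloor r/2\rfloor$, so both endpoints must satisfy $f \ge \lceil r/2\rceil$. For odd $r$ this already contradicts the mere existence of a chord, since $\lceil r/2\rceil > \lfloor r/2\rfloor$. For even $r$ it pins every chord to join two vertices at the \emph{maximal} distance $r/2$; inspecting the triangulation then forces a very rigid picture (an ear tip has $f \le r/2-1$, a triangle with exactly one polygon edge is impossible as that edge would join two distance-$r/2$ vertices, so every polygon edge meets an ear tip and the ear tips form an independent vertex cover of $C$). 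This borderline configuration is then destroyed by a \emph{second} use of chordality: attaching to each distance-$r/2$ vertex a shortest path to $x$ produces short cycles through $x$ whose forced triangulating chords turn out to satisfy $f$-sum $\le r-1$, i.e.\ to lie in $H$, contradicting chordlessness.

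To pass from a single vertex to a connected set $X$, note that the model argument used $X = \{x\}$ only through the identity $g = f + f$; for connected $X$ one has merely $g(a,b) \ge f(a)+f(b)$, so a chord avoiding $H$ no longer forces its endpoints to be far from $X$. I plan to recover control by working with $g$ directly and by an extremal choice of $C$ (shortest chordless-in-$H$ cycle, and among those one minimizing $\sum_i f(v_i)$), together with geodesics: replacing the witnesses of consecutive cycle edges by shortest paths into the connected set $X$ yields a genuinely chordal auxiliary configuration $C \cup (\text{geodesics to } X)$ inside $G$, whose triangulation must supply a diagonal whose two endpoints admit a common nearby witness $w \in X$, i.e.\ an edge of $H$ shortcutting $C$.

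The main obstacle is exactly this gap between $g(a,b)$ and $f(a)+f(b)$ for connected $X$, compounded by the even-$r$ borderline case. Concretely, the naive ``ear'' estimate is off by one: bounding the ear chord $\{v_{i-1},v_{i+1}\}$ through a single witness $w$ of one incident edge only gives $g(v_{i-1},v_{i+1}) \le \bigl(f(v_{i-1}) + d(v_i,w)\bigr) + \bigl(d(v_i,w)+1\bigr) \le r$ rather than the desired $\le r-1$, so the contradiction cannot be extracted purely locally. I expect the off-by-one to be absorbed precisely by the extremal minimality of $C$ and by the global second application of chordality to the geodesic-augmented cycle, where the additional structure rules out the equality case $g = r$.
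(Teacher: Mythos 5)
Your proposal correctly isolates the two genuine difficulties --- the even-$r$ borderline and the gap between $g(a,b)$ and $f(a)+f(b)$ for a connected set $X$ --- but it resolves neither, and both places where the argument must actually close are flagged with ``I plan to'' or ``I expect''. Concretely: even in the model case $X=\{x\}$ with $r$ even, the analysis of the triangulation (ear tips forming an independent vertex cover, the ``second use of chordality'' on the geodesic-augmented cycle producing chords with $f$-sum at most $r-1$) is asserted rather than proved, and the equality case $f(v_i)+f(v_j)=r$ is exactly where a chord of $G$ fails to lie in $H$; nothing in the sketch rules it out. For general connected $X$ the whole argument is deferred to an unspecified extremal choice of $C$ plus a geodesic-augmented configuration; you yourself observe that the local ear estimate only gives $g(v_{i-1},v_{i+1})\le r$ rather than $\le r-1$, and no mechanism is exhibited that absorbs this off-by-one. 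As it stands this is a plan, not a proof.

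For comparison, the paper splits by parity. For odd $r$ it proves (\cref{lem:RoddRadiusBallsAreInducedForCliques}) that $B_{r/2}(X)$ is an \emph{induced} subgraph of $G$: given an edge $uv$ with both ends at distance exactly $\lfloor r/2\rfloor$ from $X$, it joins shortest paths from $u$ and $v$ into $X$ by a path inside the connected set $X$ and applies chordality of $G$ to the resulting cycle to force the edge into a single ball $B_{r/2}(x)$ --- this is the rigorous version of your ``geodesic-augmented cycle'', and it is where the gap between $g$ and $f+f$ is actually handled. For even $r$ the paper does \emph{not} analyze a triangulation at all: a chordless cycle in $B_{r/2}(X)$ must contain a vertex $z$ at the maximal distance $d=r/2$, its two cycle-neighbours $u,w$ lie in $N^{d-1}(X)$ and both belong to an inclusion-minimal $X$--$z$ separator contained in $N^{d-1}(X)$, and Dirac's theorem (\cref{thm:CharacterisationChordalViaMinimalVertexSeparator}) yields the edge $uw$, which lies in the ball because one endpoint is at distance at most $d-1$ from $X$. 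This minimal-separator argument is the missing ingredient that reaches precisely the borderline your distance arithmetic cannot; if you want to salvage your approach, replacing the even-$r$ triangulation analysis by Dirac's theorem applied to a minimal $X$--$z$ separator is the natural fix.
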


We recall some standard terminology.
Let $G$ be a graph, and let $X$ and $Y$ be two vertex-subsets of $G$.
An \defn{$X$--$Y$ path} is a path whose first vertex is in $X$, last vertex is in $Y$, and which is otherwise disjoint from $X \cup Y$.
An \defn{$X$-path} is a non-trivial path whose first and last vertex is in $X$ and which is otherwise disjoint from $X$.
A path $P$ is \defn{through $X$} if all internal vertices of $P$ are in $X$ and $P$ has at least one internal vertex. 

Let $d \geq 0$ be a real number.
The \defn{depth-$d$ closed neighborhood $N_G^d[X]$} is the set of all vertices of $G$ that have distance at most $d$ to $X$, and the \defn{depth-$d$ (open) neighborhood $N_G^d(X)$} is the set of all vertices of $G$ that have distance precisely $d$ to $X$. 
If the graph $G$ is clear from context, we also write \defn{$N^d[X]$} and \defn{$N^d(X)$}.
Note that the depth-$d$ open neighborhoods are always empty when $d$ is not an integer.
For any vertex $v$ of $G$, we use \defn{$N_G^d[v]$} and \defn{$N_G^d(v)$} as a shorthand for $N_G^d[\{v\}]$ and $N_G^d(\{v\})$. 

Let us now compare the depth-$d$ closed neighborhoods with the balls of radius $r$.
If $r$ is even, i.e.\ $r = 2d$ for $d \in \N$, then $B_{r/2}(v)$ is obtained from $G[N^d[v]]$ by deleting the edges between vertices of distance precisely $r/2$ to $v$.
Otherwise, if $r$ is odd, i.e.\ $r=2d+1$ for some $d \in \N$, then $B_{r/2}(v)$ is the induced subgraph  $G[N^d[v]]$ of $G$.

Moreover, if $r \geq 3$ is odd, then the $r/2$-ball around any connected vertex-subset $X$ in a chordal graph $G$ is the induced subgraph $G[N^d[X]]$ of $G$, and thus chordal.

\begin{lemma}\label{lem:RoddRadiusBallsAreInducedForCliques}
    Let $r \geq 3$ be an odd integer.
    Let $X$ be a connected vertex-subset of a (possibly infinite) chordal graph $G$.
    Then $B_{r/2}(X)$ is an induced subgraph of $G$.
\end{lemma}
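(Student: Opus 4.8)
The plan is to show that, writing $r = 2d+1$ with $d \ge 1$, the graph $B_{r/2}(X)$ already contains \emph{all} edges of $G$ on its vertex set, i.e.\ that it equals $G[N^d[X]]$; since induced subgraphs of chordal graphs are chordal, this proves the odd case of \cref{lem:ChordalIsLocallyChordalForCliques} as well. The vertex set of $B_{r/2}(X)=\bigcup_{v\in X}B_{r/2}(v)$ is $N^d[X]$, and, as noted above, $B_{r/2}(v)=G[N^d[v]]$ for each $v$. Hence an edge $uw$ of $G$ with $u,w\in N^d[X]$ lies in $B_{r/2}(X)$ if and only if $u,w\in N^d[v]$ for some $v\in X$. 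So it suffices to prove: for every edge $uw$ of $G$ with both endpoints in $N^d[X]$ there is a vertex $v\in X$ with $\mathrm{dist}(u,v)\le d$ and $\mathrm{dist}(w,v)\le d$.

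First I would dispose of the easy cases using only the triangle inequality; assume without loss of generality $\mathrm{dist}(u,X)\le\mathrm{dist}(w,X)$. If $\mathrm{dist}(w,X)\le d-1$, take $v\in X$ nearest to $w$, so $\mathrm{dist}(u,v)\le\mathrm{dist}(w,v)+1\le d$. If $\mathrm{dist}(w,X)=d$ but $\mathrm{dist}(u,X)=d-1$, take $v\in X$ nearest to $u$, so $\mathrm{dist}(w,v)\le\mathrm{dist}(u,v)+1=d$. The only remaining case is $\mathrm{dist}(u,X)=\mathrm{dist}(w,X)=d$, and this is the only place where chordality is needed.

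For the hard case I would reduce the radius to $1$. Set $Y:=N^{d-1}[X]$. Since $X$ is connected, $Y$ is connected (each vertex of $Y$ is joined to $X$ by a shortest path lying inside $Y$), and $\mathrm{dist}(u,Y)=\mathrm{dist}(w,Y)=1$ with $u,w\notin Y$. The crux is then the following \emph{Key Lemma}: if $G$ is chordal, $Y$ is connected, and $uw$ is an edge with $u,w\notin Y$ but each having a neighbour in $Y$, then $u$ and $w$ have a common neighbour in $Y$. Granting this, let $v'$ be a common neighbour of $u,w$ in $Y$ and pick $v\in X$ with $\mathrm{dist}(v',v)\le d-1$; the triangle inequality yields $\mathrm{dist}(u,v),\mathrm{dist}(w,v)\le 1+(d-1)=d$, as required.

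Finally, to prove the Key Lemma I would argue by contradiction and minimality. Suppose $u,w$ have no common neighbour in $Y$, and among all $a\in N(u)\cap Y$, $b\in N(w)\cap Y$ and all $a$--$b$ paths in $G[Y]$ choose $a,b$ and a shortest such path $R=x_0x_1\cdots x_m$ (with $a=x_0$, $b=x_m$) with $m$ minimum; the no‑common‑neighbour assumption forces $a\ne b$, so $m\ge 1$. The vertices $u,w,x_0,\dots,x_m$ are pairwise distinct, so $u\,x_0\,x_1\cdots x_m\,w\,u$ is a cycle of length $m+3\ge 4$ and therefore has a chord. Since $R$ is a shortest path in $G[Y]$, no chord joins two of the $x_i$, so every chord is incident with $u$ or $w$: a chord $ux_m$ or $wx_0$ would exhibit a common neighbour ($x_m$, resp.\ $x_0$), while any chord $ux_i$ or $wx_i$ with $1\le i\le m-1$ yields a strictly shorter admissible path ($x_i\cdots x_m$ from a neighbour of $u$ to $b$, resp.\ $x_0\cdots x_i$ from $a$ to a neighbour of $w$), contradicting the minimality of $m$. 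I expect this minimal‑cycle argument to be the main obstacle, since it is the only step that genuinely uses chordality; everything else is the triangle inequality together with the radius‑one reduction.
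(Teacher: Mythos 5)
Your proof is correct, but it takes a genuinely different route from the paper's. The paper argues directly on an edge $uw$ with both ends at distance exactly $d$ from $X$: it fixes shortest paths $P$, $Q$ from nearest vertices $x,y\in X$ to the two endpoints, joins them by a shortest $x$--$y$ path $R$ inside $G[X]$, and extracts an induced cycle of length at least $4$ from the resulting cycle by a somewhat delicate chord analysis (including a shortening step that replaces $R$ whenever a chord lands on it, and a separate case for edges between $P$ and $Q$ at unequal levels). You instead dispose of all but the equidistant case by the triangle inequality and isolate the use of chordality in a single clean statement --- if $u,w$ are adjacent, both outside a connected set $Y$, and each has a neighbour in $Y$, then they have a common neighbour in $Y$ --- which you prove by taking a length-minimal connecting path in $G[Y]$ between neighbours of $u$ and of $w$ and ruling out every possible chord of the resulting cycle. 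Applied to $Y=N^{d-1}[X]$ this produces a single $v'\in Y$, hence a single $v\in X$, with $u,w\in N^d[v]$, which is exactly what is needed since $B_{r/2}(v)=G[N^d[v]]$ for odd $r$. Your version is more modular: the Key Lemma is a reusable local consequence of chordality and its proof is a standard minimal-counterexample argument, whereas the paper keeps two shortest paths to possibly different vertices of $X$ in play throughout and pays for it with extra case analysis. One cosmetic remark: your second ``easy'' case is stated for $\mathrm{dist}(u,X)=d-1$ only, but the same choice of $v$ handles every $\mathrm{dist}(u,X)\le d-1$ (and smaller values are anyway impossible when $\mathrm{dist}(w,X)=d$, since $uw$ is an edge), so the case split is complete.
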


\begin{proof}
    Let $d := \lfloor r/2 \rfloor$. Every edge $uv$ with one endvertex of distance strictly less than $d$ from $X$ is in $B_{r/2}(X)$. Thus,
    consider an edge $e = uv$ of $G$ whose endvertices $u,v$ are both in $B_{r/2}(X)$ and both have distance exactly $d$ from $X$.
    In particular, there are $x,y \in X$ such that $u \in N^d(x)$ and $v \in N^d(y)$.
    Fix a shortest $x$--$u$ path $P$ and a shortest $y$--$v$ path $Q$ in $G$.

    First, suppose that there is an edge $pq$ from a vertex $p$ of $P$ and a vertex $q$ of $Q$. Up to symmetry, we may assume that $d(x, p) \leq d(y, q)$. Suppose that $d(x, p) < d(y, q)$. Then there is a path from $x$ to $v$ of distance at most $d$, by concatenating the subpath of $P$ from $x$ to $p$, the edge $pq$, and the subpath of $Q$ from $q$ to $v$, so $
    \{u, v\} \subseteq B_{r/2}(x)$. Since $r$ is odd, the ball $B_{r/2}(x)$ is an induced subgraph of $G$, and so edge $uv \in B_{r/2}(x) \subseteq B_{r/2}(X)$. Therefore, we assume from now on that $P$ and $Q$ are disjoint and every edge between $P$ and $Q$ joins two vertices whose distance from $X$ are equal. 
    
    Let $R$ be a shortest $x$--$y$ path in the connected subgraph $G[X]$ of $G$.
    Now $R$, $P$, $Q$ and $uv$ form a cycle $O$.
    It can only have two types of chords $f$:
    Either $f$ is an edge that joins a vertex of $R$ and to the second vertex of $P$ or $Q$, or $f$ is an edge that joins a vertex of $P$ and a vertex of $Q$ whose distances from $X$ are the same.
    In the former case, we may shorten $R$, rename the endvertex of $f$ in $X$ to $x$ or $y$, and replace the first edge of $P$ or $Q$, respectively, by $f$.
    Since the new $R$ is shorter, we will thus eventually end up in the latter case.
    In the latter case, we choose $f$ to be such an edge whose endvertices have the largest possible distance from $X$ but less than $d$.
    Then the four paths $P$, $uv$, $Q$ and $f$ together contain an induced cycle of length at least $4$ in $G$.
    This contradicts the assumption that $G$ is chordal.
\end{proof}

\cref{lem:RoddRadiusBallsAreInducedForCliques} shows \cref{lem:ChordalIsLocallyChordalForCliques} for odd $r \geq 3$.
To prove \cref{lem:ChordalIsLocallyChordalForCliques} for even $r \geq 4$, we provide a lemma to find certain edges in $r/2$-balls of chordal graphs, which indeed will be how we find our desired chords in the proof of \cref{lem:ChordalIsLocallyChordalForCliques}:

\begin{lemma}\label{lem:edge-on-pre-last-levelForConnectedX}
    Let $r \geq 4$ be an integer and set $d: = \lfloor r/2 \rfloor$.
    Let $X$ be a connected vertex-subset of a (possibly infinite) chordal graph $G$.
    If, for any two distinct vertices $u,w \in N^{d-1}(X)$, there is a $u$--$w$ path in $B_{r/2}(X)$ through $N^d(X)$, then $uw$ is an edge of $B_{r/2}(X)$.
\end{lemma}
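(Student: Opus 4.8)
The plan is to prove the stronger statement that $uw$ is an edge of $G$ itself; the passage from there to ``$uw$ is an edge of $B_{r/2}(X)$'' is then a one-line distance computation. Indeed, fix $v\in X$ with $d(u,v)=d-1$ (possible since $u\in N^{d-1}(X)$). If $uw\in E(G)$, then $d(w,v)\le d(w,u)+d(u,v)=d$, so $d(u,v)+d(w,v)\le 2d-1<r$ (because $d=\lfloor r/2\rfloor$ gives $r\ge 2d$), and hence $uw$ lies in $B_{r/2}(v)\subseteq B_{r/2}(X)$. So the whole difficulty is to produce the edge $uw$ in $G$, and I would do this by building a single long cycle and using chordality to force all chords but $uw$ to disappear.

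Fix distinct $u,w\in N^{d-1}(X)$ together with a $u$--$w$ path through $N^{d}(X)$, and assume for contradiction that $uw\notin E(G)$. I would combine two $u$--$w$ paths lying on opposite sides of the level $N^{d-1}(X)$. The \emph{outer} path $Z$ is a shortest $u$--$w$ path in the induced subgraph $G[\{u,w\}\cup N^{d}(X)]$; it exists by hypothesis, is induced (being a shortest path in an induced subgraph), and all of its internal vertices lie at distance exactly $d$ from $X$. The \emph{inner} path $L$ is a shortest $u$--$w$ path in $G[\{u,w\}\cup N^{d-2}[X]]$, so its internal vertices lie at distance at most $d-2$ from $X$. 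Here $r\ge 4$, i.e.\ $d\ge 2$, is used in two ways: $N^{d-2}[X]$ is defined and contains $X$, and each of $u,w$ (being at distance exactly $d-1\ge 1$) has a neighbour at distance $d-2$ on a shortest path to $X$; since $G[N^{d-2}[X]]$ is connected (every vertex reaches the connected set $X$ inside $N^{d-2}[X]$), the path $L$ exists, is induced, and has length at least $2$ by our assumption $uw\notin E(G)$.

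Now the internal vertices of $Z$ (distance $d$) are disjoint from those of $L$ (distance $\le d-2$), so $Z$ and $L$ meet exactly in $u$ and $w$ and their union is a cycle $C$ of length at least $4$. Chordality yields a chord of $C$, and I would eliminate every possibility: a chord with both ends inside $Z$, or both ends inside $L$, is excluded because $Z$ and $L$ are induced paths; a chord joining an interior vertex of $Z$ to an interior vertex of $L$ is excluded because adjacent vertices differ in distance to $X$ by at most $1$, whereas here the gap is at least $2$; and a chord joining an interior vertex of $Z$ (respectively $L$) to $u$ or $w$ would be a chord of the induced path $Z$ (respectively $L$). The only surviving chord is $uw$, contradicting $uw\notin E(G)$; hence $uw\in E(G)$, as desired. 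The crux—and the main obstacle—is precisely the choice to route $L$ through $N^{d-2}[X]$ rather than $N^{d-1}[X]$: this opens a distance gap of at least two to the interior of $Z$ and is exactly what kills the only dangerous family of chords, and it is where the hypothesis $r\ge 4$ is indispensable.
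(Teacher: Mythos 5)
Your proof is correct, but it takes a genuinely different route from the paper's. The paper's proof is separator-based: it picks an internal vertex $z\in N^d(X)$ of the given path, observes that $N^{d-1}(X)$ separates $X$ from $z$, shrinks it to an inclusion-minimal $X$--$z$ separator $S\subseteq N^{d-1}(X)$, argues that $u$ and $w$ must both lie in $S$ (any $X$--$z$ walk via $u$ or $w$ meets $N^{d-1}(X)$ only there), and then invokes Dirac's theorem (\cref{thm:CharacterisationChordalViaMinimalVertexSeparator}) to conclude that $S$ is a clique, hence $uw\in E(G)$. You instead construct an explicit cycle from an induced outer $u$--$w$ path through $N^{d}(X)$ and an induced inner $u$--$w$ path through $N^{d-2}[X]$, and rule out every chord except $uw$ by the fact that adjacent vertices differ in distance to $X$ by at most one; the distance gap of two between the interiors of the two paths is exactly the right idea, and your verification of the existence and disjointness of the two paths (using $d\ge 2$ and the connectedness of $X$) is complete. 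What each approach buys: yours is self-contained, using only the definition of chordality, and is close in spirit to the paper's own proof of \cref{lem:RoddRadiusBallsAreInducedForCliques}; the paper's is shorter because Dirac's theorem is already set up for use elsewhere, and it yields slightly more (the whole separator $S$ is a clique, not just the pair $u,w$), which fits the minimal-separator theme developed later in \cref{sec:CharViaMinLocSep}. Your reduction of the conclusion to ``$uw\in E(G)$'' via the distance computation $d(u,v)+d(w,v)\le 2d-1<r$ is also correct and matches the paper's closing remark that $uw$ lies in $B_{r/2}(X)$ because $u,w$ are close to $X$.
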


\begin{proof}

    Let $P$ be a $u$--$w$ path $P$ in $B_{r/2}(X)$ through $N^d(X)$.
    In particular, an internal vertex $z$ of $P$ is in $N^d(X)$.
    Note that, since $d \geq 2$, $N^{d-1}(X)$ is an $X$--$z$ separator in $G$.
    Let $S \subseteq N^{d-1}(v)$ be an inclusion-minimal $X$--$z$ separator.
    As the $u$--$w$ path $P$ through $N^d(X)$ contains $z$ and $S \subseteq N^{d-1}(X)$ is an $X$--$z$ separator in $G$, the endvertices $u,w$ of $P$ are in $S$.
    As $X$ is connected in $G$, $S$ is indeed a minimal $x$--$z$ separator for any $x \in X$.
    Thus, by \cref{thm:CharacterisationChordalViaMinimalVertexSeparator}, the minimal separator $S$ of the chordal graph $G$ is a clique.
    In particular, $uw$ is an edge of $G$, and hence of $B_{r/2}(X)$, as $u$ (and also $w$) is in $N^{d-1}(X)$.
\end{proof}

\begin{proof}[Proof of \cref{lem:ChordalIsLocallyChordalForCliques}]
    Let $X$ be a connected vertex-subset of $G$.
    We want to show that the $r/2$-ball $B := B_{r/2}(X)$ around $X$ in $G$ is chordal.
    If $r$ is odd,
    then $B$ is an induced subgraph by \cref{lem:RoddRadiusBallsAreInducedForCliques}, and thus $B$ is itself chordal.
    
    So assume that $r$ is even; in particular, $r \geq 4$.
    Let $O$ be a cycle of length at least $4$ in an $r/2$-ball $B$ around a connected vertex-subset $X$ in $G$.
    It suffices to show that $O$ has a chord.
    Since $r-1$ is odd, we have seen in the previous paragraph that the $(r-1)/2$-ball around~$X$ in~$G$ is chordal.
    Thus, $O$ has a chord or contains a vertex $z$ of distance $r/2 =: d$ from $X$.
    
    If $O$ contains a chord, then we are done.
    So we may assume that $O$ contains a vertex $z$ of distance $d$ to $X$.
    Let $u,w$ be the two (distinct) neighbors of $z$ on the cycle $O$.
    Since $r$ is even, it follows that $N^d(X)$ is independent in $B$, so every neighbor of $z$ in $B$ is in $N^{d-1}(X)$. 
    In particular, $u, w \in N^{d-1}(X)$.
    As $uzw$ is a $u$--$w$ path through $N^d(X)$, we obtain from \cref{lem:edge-on-pre-last-levelForConnectedX} that $uw \in B$.
    Hence, $uw$ is a chord of $O$ in $B$, as desired.
\end{proof}

\subsection{Background: Local cover \texorpdfstring{\&}{and} cliques}\label{sec:background:loc-cov}
 
We remind the reader that the graphs we consider in this paper are always simple, i.e.\ they neither have loops nor parallel edges.
A \defn{covering} of a (loopless) graph $G$ is a surjective homomorphism $p \colon \hat G \to G$ such that $p$ restricts to an isomorphism from the edges incident to any given vertex $\hat v$ of $\hat G$ to the edges incident to its projection $v \coloneqq p_r(\hat v)$. 
In this paper, we restrict our view to those coverings $p$ of graphs $G$ whose preimage $p^{-1}(C)$ of any component $C$ of $G$ is connected. Most of the time, this means we consider coverings $p: \hat G \to G$ where both $G$ and $\hat G$ are connected. 

Let $r \in \N$.
A (graph) homomorphism $p \colon \hat G \to G$ is \defn{$r/2$-ball-preserving} if $p$ restricts to an isomorphism from $B_{\hat G}(\hat v, r/2)$ to $B_{G}(v, r/2)$ for every vertex $\hat v$ of $\hat G$ and $v \coloneqq p_r(\hat v)$.
Thus, a surjective homomorphism $p \colon \hat G \to G$ is a covering of a (simple) graph $G$ if and only if $p$ is $2/2$-ball preserving.

In \cite{canonicalGD} the \defn{$r$-local covering $p_r \colon G_r \to G$} of a connected graph $G$ is introduced as the covering of $G$ whose characteristic subgroup is the $r$-local subgroup of the fundamental group of $G$.
Since the formal definition is not relevant to this paper, we refer the reader to \cite[\S 4]{canonicalGD} for details.
In this paper, we will use an equivalent characterization of the $r$-local covering.
The $r$-local covering is the universal $r/2$-ball-preserving covering of $G$ \cite[Lemma 4.2, 4.3 \& 4.4]{canonicalGD}: for every $r/2$-ball-preserving covering $p \colon \hat G \to G$ there exists a covering $q \colon G_r \to \hat G$ such that $p_r = p \circ q$.
Following this equivalent description of the $r$-local covering, it is also defined for non-connected graphs $G$.

We refer to the graph $G_r$ as the \defn{$r$-local cover} of $G$.
Note that the $0$-, $1$- and $2$-local covers of a (simple) graph $G$ are forests. For the reader who is familiar with coverings, we remark that the $0$-, $1$- and $2$-local coverings are all indeed the universal covering of $G$.
A \defn{deck transformation} of a covering $p \colon \hat G \to G$ is an automorphism $\gamma$ of $\hat G$ that commutes with $p$, i.e.\ $p = p \circ \gamma$.
We denote the group of deck transformations of a covering $p$ by \defn{$\Gamma(p)$}.

\medskip

A \defn{clique} $X$ in $G$ is a vertex-subset which induces a complete subgraph of $G$.
Local covers of graphs interact well with cliques:

\begin{lemma}[\cite{computelocalSeps}, Lemma~5.15]\label{lem:CliqueAndLocalCover}
    Let $G$ be a (possibly infinite) graph and let $r \geq 3$ an integer.
    \begin{enumerate}
        \item For every clique $\hat X$ of $G_r$, its projection $p_r(\hat X)$ is a clique of $G$, and $p_r$ restricts to a bijection from $\hat X$ to $p_r(\hat X)$.
        \item For every clique $X$ of $G$, there exists a clique $\hat X$ of $G_r$ such that $p_r$ restricts to a bijection from $\hat X$ to $X$.
        \item\label{item:CliquesMove} $N_{G_r}[\hat X] \cap N_{G_r}[\gamma(\hat X)] = \emptyset$ for every clique $\hat X$ of $G_r$ and every $\gamma \in \Gamma(p_r) \setminus \{\id_{G_r}\}$.
    \end{enumerate}
\end{lemma}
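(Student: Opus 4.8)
The backbone of all three parts is the observation that, because $r \geq 3$, the $r/2$-ball around any vertex $\hat v$ of $G_r$ contains the whole induced subgraph $G_r[N[\hat v]]$ on its closed neighborhood (indeed $B_{3/2}(\hat v) = G_r[N[\hat v]]$ and $B_{3/2}(\hat v) \subseteq B_{r/2}(\hat v)$), and any clique through $\hat v$ lies in $N[\hat v]$. Hence the $r/2$-ball-preserving property makes $p_r$ restrict to an isomorphism $G_r[N[\hat v]] \to G[N[p_r\hat v]]$; in particular $p_r$ is injective on $N[\hat v]$ and reflects edges within it. Part (i) is then immediate: for a clique $\hat X$ pick $\hat v \in \hat X$, note $\hat X \subseteq N[\hat v]$, and apply this local isomorphism to see that $p_r(\hat X)$ is complete and that $p_r$ bijects $\hat X$ onto it. For part (ii), given a clique $X \ni v$ of $G$, lift $v$ to some $\hat v$ by surjectivity and pull $X$ back through the inverse isomorphism $G[N[v]] \to G_r[N[\hat v]]$; its image $\hat X$ is a clique with $p_r$ bijecting $\hat X$ onto $X$. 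I would record here the elementary consequence used repeatedly below: for any $\hat v$ and any clique $X \subseteq N[p_r\hat v]$, the set $p_r^{-1}(X) \cap N[\hat v]$ is the unique lift of $X$ inside the closed neighborhood of $\hat v$.

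Part (iii) is the crux. I would first invoke the standard fact from covering theory that the deck group of a connected covering acts freely on vertices, i.e.\ a deck transformation fixing a vertex is the identity (restricting to the component containing $\hat X$ if $G$ is disconnected). Then argue by contradiction: suppose $N_{G_r}[\hat X] \cap N_{G_r}[\gamma(\hat X)] \neq \emptyset$ for some $\gamma \in \Gamma(p_r) \setminus \{\id\}$, and fix $\hat u$ in this intersection together with $\hat a \in \hat X$ and $\hat c \in \gamma(\hat X)$ satisfying $\hat a, \hat c \in N[\hat u]$. Writing $X := p_r(\hat X) = p_r(\gamma(\hat X))$, both $p_r\hat a$ and $p_r\hat c$ lie in the clique $X$.

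Working inside $B_{r/2}(\hat u)$, where $p_r$ is injective, I would split into two cases. If $p_r\hat a = p_r\hat c$, injectivity on $N[\hat u]$ gives $\hat a = \hat c$, so $\hat a \in \hat X \cap \gamma(\hat X)$. Otherwise $p_r\hat a$ and $p_r\hat c$ are distinct vertices of the clique $X$, hence adjacent in $G$; since this edge lies in $B_{r/2}(p_r\hat u)$ and $p_r$ is an isomorphism on $B_{r/2}(\hat u)$ reflecting edges, $\hat a$ and $\hat c$ are adjacent in $G_r$, so $\hat c \in N[\hat a]$. Applying the uniqueness statement with base vertex $\hat a$ (here $X \subseteq N[p_r\hat a]$, valid since $r \geq 3$) and $p_r\hat c \in X$, I conclude $\hat c \in p_r^{-1}(X) \cap N[\hat a] = \hat X$, so again $\hat c \in \hat X \cap \gamma(\hat X)$. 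In either case $\hat X$ and $\gamma(\hat X)$ share the vertex $\hat c$; writing $\hat c = \gamma(\hat b)$ with $\hat b \in \hat X$ and using that $p_r$ is injective on $\hat X$ by part (i) together with $p_r(\gamma(\hat b)) = p_r(\hat b)$, I get $\hat c = \hat b = \gamma(\hat b)$, a fixed vertex of $\gamma$. By freeness $\gamma = \id$, the desired contradiction.

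The main obstacle is precisely this last part, and the delicate point is to keep every comparison between vertices at mutual distance at most $1$. For small $r$ (most sharply $r = 3$, where $B_{3/2}$ is just the closed-neighborhood graph) an entire clique sitting at distance $2$ from a base vertex need not fit into a single ball, so one cannot simply place both $\hat X$ and $\gamma(\hat X)$ into one ball and compare them wholesale. The case split above is arranged exactly so that I only ever certify adjacency of $\hat a$ and $\hat c$ before locating the shared vertex, which keeps the argument uniform over all $r \geq 3$; verifying that the relevant edges genuinely lie in the balls (the endpoint-distance-sum condition in the definition of $B_{r/2}$) is the routine check underlying the whole scheme.
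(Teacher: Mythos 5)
This lemma is quoted in the paper from \cite{computelocalSeps} (Lemma~5.15) and is not proved here, so there is no in-paper argument to compare yours against; I can only assess your proposal on its own terms, and it is correct. Parts (i) and (ii) follow the expected route: since $r\geq 3$, the $r/2$-ball-preserving property yields an isomorphism $G_r[N[\hat v]]\to G[N[p_r(\hat v)]]$ on closed neighbourhoods, and a clique through a vertex lives in that vertex's closed neighbourhood. Your proof of (iii) is the substantive part, and the two-step reduction is sound: you first force a common vertex of $\hat X$ and $\gamma(\hat X)$ (using that two lifts of vertices of $X$ lying in a common closed neighbourhood are either equal or adjacent, and that $p_r^{-1}(X)\cap N[\hat a]$ is the unique lift of $X$ through $\hat a$, which equals $\hat X$ by the cardinality count $|\hat X|=|X|=|p_r^{-1}(X)\cap N[\hat a]|$), and then use injectivity of $p_r$ on $\hat X$ to turn that common vertex into a fixed vertex of $\gamma$, contradicting freeness of the deck action. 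Your insistence on only ever comparing vertices at mutual distance at most $1$ is exactly what makes the argument work uniformly down to $r=3$. One caveat: freeness ("a deck transformation with a fixed vertex is the identity") holds per component of $G_r$; if $G$ is disconnected, some $\gamma\in\Gamma(p_r)\setminus\{\id_{G_r}\}$ could act as the identity on the component containing $\hat X$, in which case $\gamma(\hat X)=\hat X$ and the claimed disjointness fails as literally stated. This is a defect of the statement in the disconnected setting rather than of your argument, and it vanishes under the paper's standing restriction to covers whose component preimages are connected together with connected $G$, but your parenthetical "restricting to the component containing $\hat X$" silently weakens the contradiction you actually derive.
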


Given a covering $q \colon \hat G \to G$ of a graph $G$, a \defn{lift} of a vertex-subset $X$ of $G$ to $\hat G$ is a vertex-subset $\hat X$ of $\hat G$ that contains precisely one vertex of each fiber $q^{-1}(v)$ with $v \in X$.
Let us now consider the $r$-local covering $p_r \colon G_r \to G$ for any integer $r \geq 0$.
A vertex-subset $\hat X$ of $G_r$ with $X := p_r(\hat X)$ is \defn{$r$-locally closed} if no $r$-local $p_r^{-1}(X)$-walk starts with a vertex of $\hat X$ and ends in a vertex of $p_r^{-1}(X) \setminus \hat X$.

For a given clique $X$ of $G$, we refer to all cliques $\hat X$ of $G_r$ such that $p_r$ restricts to an isomorphism from $\hat X$ to $X$ as \defn{lifts of the clique $X$ to $G_r$}.
We remark that any lift $\hat X$ of a clique $X$ to $G_r$ is $r$-locally closed.
\cref{lem:CliqueAndLocalCover} ensures the following:
\begin{itemize} 
\item that every clique $\hat X$ of $G_r$ projects to a clique $X := p_r(\hat X)$ of $G$ and $\hat X$ is a lift of $X$ to $G_r$;
\item that  every clique of $G$ has a lift to $G_r$; and 
\item not only are two distinct lifts of any clique of $G$ to $G_r$ disjoint, but their closed neighborhoods are also disjoint.
\end{itemize}

We conclude this section by proving that statement \cref{basic:iii:sec} implies \cref{basic:i:sec} in \cref{thm:FIRSTTHREE}. 

\begin{proof}[Proof of \cref{thm:FIRSTTHREE}~\cref{basic:iii:sec}$\rightarrow$\cref{basic:i:sec}]
    \phantomsection\label{proof:FIRSTTHREE:iiiToi}
    Since the $r$-local cover $G_r$ is chordal by~\cref{basic:iii:sec}, its $r/2$-balls are chordal by \cref{lem:ChordalIsLocallyChordalForCliques}. Since the $r$-local covering $p_r$ preserves the $r/2$-balls, the $r/2$-balls in $G$ are also chordal. 
    Thus, $G$ is $r$-locally chordal, as desired by \cref{basic:i:sec}.
\end{proof}

\subsection{Background: Binary cycle space}

Let $G$ be a graph.
By \defn{$\cE(G)$}, we denote the $\Z_2$-vector space whose elements are the edge-subsets of $G$ and whose vector addition is the symmetric difference $\triangle$.
As is common in vector spaces, we say that an element $C$ of $\cE(G)$ is \defn{generated} by a subset $\cC$ of $\cE(G)$ if there is a finite $\cC' \subseteq \cC$ such that the symmetric difference of all the elements of~$\cC'$ is~$C$.
If all elements of a subset $\cD$ of $\cE(G)$ are generated by a set $\cC$, the set $\cC$ \defn{generates} $\cD$.
The \defn{binary cycle space} $\cZ(G)$ of $G$ is the subspace of $\cE(G)$ that consists of all elements generated by the set of (the edge sets of) cycles in $G$. 

An immediate consequence of the definition of the local covering is that the binary cycle space of any local cover is generated by its short cycles: 
\begin{lemma}[\cite{canonicalGD}, Lemma~4.6]\label{lem:BinaryCycleSpaceGenByShortCycles} 
    Let $G$ be a (possibly infinite) graph, and let $r \geq 0$ be an integer.
    Then the binary cycle space of the $r$-local cover $G_r$ of $G$ is generated by its cycles of length at most $r$ (equivalently: each of the cycles in the $r$-local cover $G_r$ is the symmetric difference of finitely many cycles of length at most $r$).
\end{lemma}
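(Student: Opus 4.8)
The plan is to read the binary cycle space $\cZ(G_r)$ as the first $\Z_2$-homology of $G_r$ and to exploit the covering-theoretic origin of $G_r$. Fix a basepoint $\hat v$ of $G_r$ over a vertex $v$ of $G$, and recall that $p_r$ identifies $\pi_1(G_r,\hat v)$ with its \emph{characteristic subgroup} $N \le \pi_1(G,v)$, which is the $r$-local subgroup. The essential input from the construction in \cite{canonicalGD} is that, because $p_r$ is the \emph{universal} $r/2$-ball-preserving covering, $N$ is exactly the normal subgroup of $\pi_1(G,v)$ generated by the (classes of) closed walks that live inside a single $r/2$-ball. Every cycle of length at most $r$ is such a walk, since a closed walk of length $\le r$ lies in the $r/2$-ball around its starting vertex; and conversely every such walk is a symmetric difference of cycles of length at most $r$ (the combinatorial claim below). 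Hence $N$ is \emph{normally} generated by the short cycles of $G$. I would then pass to homology: the map $\Phi\colon \pi_1(G_r,\hat v)\to\cZ(G_r)$ sending a based closed walk to the set of edges it traverses an odd number of times is a surjective homomorphism onto $\cZ(G_r)$, so it suffices to produce a generating set of $\pi_1(G_r,\hat v)$ whose $\Phi$-images are symmetric differences of cycles of length at most $r$.

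Such a generating set is furnished by the conjugates $g\,\sigma\,g^{-1}$ with $\sigma$ a short cycle of $G$ and $g\in\pi_1(G,v)$, where it suffices to let $g$ range over coset representatives of $\Gamma(p_r)$. I would realise $g\,\sigma\,g^{-1}$ as a based loop in $G_r$: walk along a fixed path $w$ from $\hat v$ to the relevant lift of the basepoint of $\sigma$, traverse the lift $\hat\sigma$ of $\sigma$, and return along $w$. Since $p_r$ preserves the $r/2$-ball containing $\sigma$, the lift $\hat\sigma$ is again a cycle of length at most $r$ in $G_r$. Under $\Phi$ the two traversals of $w$ cancel modulo $2$, so $g\,\sigma\,g^{-1}$ maps precisely to the short cycle $\hat\sigma\in\cZ(G_r)$. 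Thus every generator of $\pi_1(G_r,\hat v)$ lands in the span of the cycles of length at most $r$, and surjectivity of $\Phi$ then gives the lemma.

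The one genuinely combinatorial point — and where I expect the real work to hide — is the claim that a closed walk contained in an $r/2$-ball is a symmetric difference of cycles of length at most $r$; this is exactly what makes the $r$-local subgroup normally generated by \emph{short} cycles rather than by all loops-in-balls, which can be long (the rim of a wheel sits inside the $3/2$-ball around its hub). I would prove it directly: reducing to a single cycle $O$ inside $B_{r/2}(u)$, fix for each vertex $x$ of $O$ a shortest $u$--$x$ path $P_x$, and for each edge $xy$ of $O$ form the closed walk $W_{xy}:=P_x\,xy\,P_y^{-1}$. Each $P_x$ occurs in exactly the two walks coming from the two edges of $O$ at $x$, so $\triangle_{xy\in O}\,W_{xy}=O$. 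It remains to bound lengths: the distances $d(u,x),d(u,y)$ are at most $\lfloor r/2\rfloor$, and for even $r$ the edge-deletion in the definition of $B_{r/2}(u)$ forbids both endvertices of an edge from lying at distance exactly $r/2$; in either parity this yields $\lvert W_{xy}\rvert = d(u,x)+1+d(u,y)\le r$. Each $W_{xy}$, being a closed walk of length at most $r$, is a symmetric difference of cycles of length at most $r$, and hence so is $O$. Feeding this into the homological argument above completes the proof; the only delicate bookkeeping is the off-by-one parity estimate on $\lvert W_{xy}\rvert$.
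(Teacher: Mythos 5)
The paper never proves this lemma: it is imported verbatim as \cite{canonicalGD}, Lemma~4.6, and flagged as ``immediate from the definition of the local covering'' --- the definition being that the characteristic subgroup $\pi_1^r(G,v)$ of $p_r$ is \emph{generated by the closed walks stemming from cycles of length at most $r$} (the paper itself records exactly this in its subsection on fundamental groups). Taking that as the definition, your proof collapses to its second paragraph, which is correct and is precisely the intended argument: the mod-$2$ Hurewicz map $\Phi$ is surjective onto $\cZ(G_r)$; a generating set of $\pi_1(G_r,\hat v)\cong\pi_1^r(G,v)$ consists of the walks $w\sigma w^{-1}$ with $\sigma$ a cycle of length at most $r$; and each such generator maps under $\Phi$ to the lift $\hat\sigma$, which is again a cycle of length at most $r$ because $p_r$ preserves the $r/2$-ball containing $\sigma$. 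Your third paragraph re-proves, via shortest-path fans, the folklore fact the authors record at the end of \cref{sec:CharactizationIndSubCov} (the fundamental cycles of a BFS tree of an $r/2$-ball have length at most $r$); it is correct, including the parity estimate $d(u,x)+d(u,y)\le r-1$, but it is not needed for this lemma under the definition above.

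The one genuine misstep is the inference ``Hence $N$ is normally generated by the short cycles of $G$.'' You derive this from your combinatorial claim, but that claim lives in $\Z_2$-homology: knowing that a loop contained in a ball has the same class in the cycle space as a sum of short cycles does not show that it is a product of conjugates of short cycles in the free group $\pi_1(G,v)$ --- homologically trivial elements of a free group need not lie in the normal closure of any prescribed set. So if you insist on starting from the characterization of $p_r$ as the universal $r/2$-ball-preserving covering, so that $N$ is normally generated by loops-in-balls rather than by short cycles, you must not upgrade the generating set; keep the generators $g\lambda g^{-1}$ with $\lambda$ a loop in a ball, and apply your combinatorial decomposition to their $\Phi$-images instead: the lift of $\lambda$ is a closed walk inside a ball of $G_r$ that $p_r$ maps isomorphically onto a ball of $G$, its odd-edge-set decomposes into your walks $W_{xy}$ of length at most $r$, and hence into cycles of length at most $r$ of $G_r$. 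This is consistent with your own (correct) reduction --- it suffices to exhibit a generating set whose $\Phi$-images are symmetric differences of short cycles --- and closes the gap with no new ideas.
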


Recall that a \defn{triangle} in a graph $G$ is a cycle of length $3$ (equivalently: clique of size $3$) in $G$.
Any short cycle in an $r$-chordal graph is generated by a few triangles:

\begin{lemma}\label{lem:CyclesGenByTriangles}
    Let $r \geq 3$ be an integer. 
    Every cycle $O$ of length $\ell \leq r$ in a (possibly infinite) $r$-chordal graph~$G$ is the symmetric difference of $\ell - 2$ triangles of $G$ whose vertex sets are included in $V(O)$.
\end{lemma}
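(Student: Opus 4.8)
The plan is to induct on the length $\ell$ of the cycle $O$, taking the collection of all cycles of length at most $r$ in $G$ as the domain of the induction. The base case is $\ell = 3$: then $O$ is itself a triangle of $G$ with $V(O) \subseteq V(O)$, and it is the symmetric difference of the single ($=\ell-2$) triangle given by itself.

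For the inductive step I would assume $4 \le \ell \le r$ and that the claim holds for every cycle of length strictly less than $\ell$. Since $G$ is $r$-chordal and $O$ has length between $4$ and $r$, the cycle $O$ has a chord $e = uv$ in $G$. As $u$ and $v$ are non-adjacent along $O$, this chord splits $O$ into two arcs $A$ and $B$ with $E(O) = A \cup B$, where $A$ and $B$ are edge-disjoint and $e \notin E(O)$. Writing $O_1$ for the cycle formed by $A$ together with $e$ and $O_2$ for the cycle formed by $B$ together with $e$, both $O_1$ and $O_2$ have length at least $3$ (each arc contains at least two edges), and their lengths $\ell_1, \ell_2$ satisfy $\ell_1 + \ell_2 = \ell + 2$. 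In particular $\ell_1, \ell_2 \le \ell - 1 < \ell \le r$, so the induction hypothesis applies to each $O_i$ and yields $\ell_i - 2$ triangles with vertex sets contained in $V(O_i) \subseteq V(O)$ whose symmetric difference is $E(O_i)$.

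It then remains to combine the two triangulations, and the key point is that the shared chord cancels. Since $A$, $B$ and $\{e\}$ are pairwise disjoint,
\[
    E(O_1) \triangle E(O_2) = (A \cup \{e\}) \triangle (B \cup \{e\}) = A \triangle B = E(O).
\]
Hence $E(O)$ is the symmetric difference of all the $(\ell_1 - 2) + (\ell_2 - 2) = \ell - 2$ triangles produced for $O_1$ and $O_2$, each with vertex set inside $V(O)$, which completes the induction.

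I do not expect a genuine obstacle here; the only points needing care are the length arithmetic $\ell_1 + \ell_2 = \ell + 2$ together with the cancellation of $e$, which is exactly what makes the count telescope to $\ell - 2$. If one wants the $\ell-2$ triangles to be genuinely distinct (so that the count is sharp and not reduced by accidental cancellation), I would additionally note that no triangle can appear in both lists, since such a triangle would need all three of its vertices in $V(O_1) \cap V(O_2) = \{u,v\}$, which is impossible; distinctness within each list is then inherited from the inductive hypothesis.
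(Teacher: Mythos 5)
Your proof is correct and follows essentially the same route as the paper's: induction on the cycle length, splitting along a chord into two shorter cycles whose lengths sum to $\ell+2$, applying the inductive hypothesis to each, and observing that the chord cancels in the symmetric difference while no triangle can lie in both lists (the paper justifies this via $O_1 \cap O_2 = e$, you via $V(O_1)\cap V(O_2)=\{u,v\}$ --- the same point). No changes needed.
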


\begin{proof}
    Let $O$ be a cycle in $G$ of length $\ell$.
    We proceed by induction on $\ell$. If $O$ has length three, then $O$ is a triangle, so $O$ is the symmetric difference of $|O|-2 = 1$ triangles. 
    Now assume that $O$ has length $\ell > 3$ (and at most $r$) and that \cref{lem:CyclesGenByTriangles} holds for all cycles of length less than $\ell$.
    Since $G$ is $r$-chordal, $O$ has a chord $e$ in~$G$.
    Thus, $O + e$ contains two cycles $O_1$ and $O_2$ whose intersection is exactly $e$ and whose union covers $O$.
    In particular, $O_1$ and $O_2$ are both shorter than $O$, the sum $|O_1| + |O_2|$ of their lengths is $|O| + 2$, and their symmetric difference $O_1 \triangle O_2$ is $O$.
    Now, we may apply the inductive hypothesis to obtain sets $\cT_1$ of size $|O_1| - 2$ and $\cT_2$ of size $|O_2|-2$ triangles whose respective symmetric differences are $O_1$ and $O_2$, and whose vertex sets are included in the respective $V(O_i) \subseteq V(O)$.
    Note that $\cT_1$ and $\cT_2$ do not contain a common triangle, as the triangles in $\cT_i$ are contained in their respective $V(O_i)$ but $O_1$ and $O_2$ intersect only in the edge $e$.
    Hence, the symmetric difference of all triangles in $\cT \coloneqq \cT_1 \cup \cT_2 = \cT_1 \triangle \cT_2$ is $O_1 \triangle O_2= O$ and 
    \[
    |\cT| = |\cT_1| + |\cT_2| = (|O_1| -2) + (|O_2| - 2) = |O_1| + |O_2| - 4 = |O| - 2 = \ell -2.
    \]
    Therefore, $\cT$ is the desired set of $\ell -2$ triangles whose symmetric difference is~$O$.
\end{proof}

\subsection{Extra: Fundamental group of \texorpdfstring{$r$}{r}-chordal graphs}

In this section, we discuss some bonus observations about the fundamental group of locally chordal graphs, which are not directly related to later results in this paper.
We assume familiarity with the definition of the $r$-local covering via the fundamental group of a graph, the \emph{fundamental group} of a graph, its \emph{$r$-local subgroups}, and \emph{closed walks based at a vertex which stem from a cycle} from \cite[\S 4.1 \& 4.2]{canonicalGD}.
The fundamental group of a chordal graph $G$ is generated by closed walks around its triangles. More generally, we have the following:

\begin{lemma}\label{lem:FundGroupGenByTriangles}
    Let $r \geq 3$ be an integer and $x_0$ a vertex of a (possibly infinite) $r$-chordal graph $G$. 
    Every closed walk $W$ based at $x_0$ which stems from  a cycle $O$ of length $\ell \leq r$ is the concatenation of $\ell-2$ closed walks based at $x_0$ which stem from triangles whose vertex sets are included in $V(O)$.
\end{lemma}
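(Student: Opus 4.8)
The plan is to mirror the inductive argument of \cref{lem:CyclesGenByTriangles}, replacing symmetric differences of edge sets by concatenations of closed walks and tracking the basepoint $x_0$ throughout. Recall from \cite[\S 4.2]{canonicalGD} that the closed walk based at $x_0$ stemming from a cycle $O$ is obtained by prepending a fixed auxiliary walk $P$ from $x_0$ to a chosen vertex of $O$, then traversing $O$, and finally appending the reverse of $P$; the relevant equalities of such walks are taken up to the reduction of backtracks (i.e.\ in the fundamental group of $G$). I would induct on the length $\ell$ of $O$. For the base case $\ell = 3$, the cycle $O$ is itself a triangle with $V(O) \subseteq V(O)$, so $W$ is already, trivially as a concatenation of a single factor, a closed walk based at $x_0$ stemming from a triangle.

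For the inductive step, suppose $\ell > 3$. Since $G$ is $r$-chordal and $\ell \le r$, the cycle $O$ has a chord $e = uv$. Exactly as in the proof of \cref{lem:CyclesGenByTriangles}, $O + e$ splits into two cycles $O_1$ and $O_2$ meeting precisely in $e$, with $|O_1| + |O_2| = \ell + 2$ and both strictly shorter than $\ell$; note that $u$ and $v$ lie on both $O_1$ and $O_2$. The heart of the argument is the identity, valid in the fundamental group, that the closed walk stemming from $O$ equals the concatenation of the closed walks stemming from $O_1$ and from $O_2$. Choosing $u$ as the attachment vertex for all three walks and writing $O$ as the arc from $u$ to $v$ avoiding $e$ followed by the other arc from $v$ back to $u$, one inserts a back-and-forth traversal of $e$ at $v$: the forward traversal of $e$ closes the first arc into a traversal of $O_1$, the backward traversal closes the second arc into a traversal of $O_2$, and the inserted backtrack along $e$ reduces away. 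Prepending $P$ and appending its reverse to each factor, the internal reverse-$P$ followed by $P$ between the two factors also reduces, yielding the asserted identity of closed walks based at $x_0$.

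Applying the inductive hypothesis to $O_1$ and $O_2$ expresses each of these two closed walks as a concatenation of $|O_1| - 2$ and $|O_2| - 2$ closed walks stemming from triangles whose vertex sets lie in $V(O_1)$ and $V(O_2)$ respectively, hence in $V(O)$. Concatenating these decompositions realizes $W$ as a product of $(|O_1| - 2) + (|O_2| - 2) = \ell - 2$ such walks, completing the induction. I expect the main obstacle to be the careful verification of the concatenation identity in the previous paragraph: namely, checking that the cancellation of the back-and-forth traversal of $e$ and of the reverse-$P$--$P$ segment are legitimate in the precise sense in which closed walks stemming from cycles are identified in \cite[\S 4.1 \& 4.2]{canonicalGD}. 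The purely combinatorial content of splitting $O$ at the chord is already handled by \cref{lem:CyclesGenByTriangles}, so the only genuinely new point is this homotopy-theoretic bookkeeping of the basepoint.
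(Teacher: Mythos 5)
Your proposal is correct and matches the paper's proof, which is exactly the one-line remark that one follows the proof of \cref{lem:CyclesGenByTriangles} while additionally tracking the base walk; you have simply written out the basepoint bookkeeping (inserting the backtrack along the chord and cancelling the $P^{-1}P$ segment, with equality understood in the fundamental group) that the paper leaves implicit.
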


\begin{proof}
    This can be shown by following the proof of \cref{lem:CyclesGenByTriangles} but additionally tracking the base walk of a closed stemming from a cycle $O$.
\end{proof}

Following the original definition of the $r$-local covering in \cite{canonicalGD}, an immediate corollary is that, for every $r \geq 3$, the $3$-local cover of an $r$-chordal graph is the $r$-local covering:

\begin{corollary}
    Let $G$ be a (possibly infinite) $r$-chordal graph and $r \geq 3$ an integer.
    Then the $3$-local covering $p_3$ is isomorphic to the $r$-local covering $p_r$ of $G$.
    In particular, if $G$ is chordal, then the $3$-local covering $p_3$ is isomorphic to the trivial covering $\id_G$ of $G$.
\end{corollary}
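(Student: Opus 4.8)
The plan is to pass to the language of the fundamental group and its $r$-local subgroups, and then deduce the equality of the relevant subgroups directly from \cref{lem:FundGroupGenByTriangles}. Fix a base vertex $x_0$ of $G$ and write $\pi := \pi_1(G,x_0)$ for the fundamental group. Recall from \cite[\S 4]{canonicalGD} that the $r$-local covering $p_r$ is the covering whose characteristic subgroup is the $r$-local subgroup $N_r \le \pi$, which is generated by the closed walks based at $x_0$ that stem from cycles of length at most $r$. Since two coverings of a connected graph are isomorphic whenever their characteristic subgroups coincide, it suffices to prove that $N_3 = N_r$.

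One inclusion is immediate: every triangle is a cycle of length $3 \le r$, so each generator of $N_3$ is among the generators of $N_r$, giving $N_3 \subseteq N_r$. For the reverse inclusion I would invoke \cref{lem:FundGroupGenByTriangles}: every closed walk based at $x_0$ that stems from a cycle of length $\ell \le r$ is a concatenation of closed walks based at $x_0$ that stem from triangles, and hence lies in $N_3$. As such walks generate $N_r$, this yields $N_r \subseteq N_3$, and therefore $N_3 = N_r$. Consequently $p_3$ and $p_r$ are isomorphic.

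For the \emph{in particular} clause, I would first record the standard fact that the closed walks stemming from cycles generate all of $\pi$, so that $\pi = \bigcup_{s \ge 3} N_s$ is the union of the ascending chain $N_3 \subseteq N_4 \subseteq \cdots$ of subgroups. If $G$ is chordal, then $G$ is $s$-chordal for every $s \ge 3$, so the first part gives $N_3 = N_s$ for all such $s$; hence $N_3 = \bigcup_{s \ge 3} N_s = \pi$. Thus the characteristic subgroup of $p_3$ is all of $\pi$, which is exactly the characteristic subgroup of the trivial covering $\id_G$, and so $p_3 \cong \id_G$.

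I expect the one point requiring care to be the handling of base walks: \cref{lem:FundGroupGenByTriangles} is phrased for walks based at a single vertex $x_0$, whereas the generators of $N_r$ may involve connecting walks from $x_0$ to each cycle. This is not a genuine obstacle—conjugating each triangle contribution by the same connecting walk keeps everything inside $N_3$—but it is the step where one must be precise about exactly how the $r$-local subgroup is generated in \cite{canonicalGD}.
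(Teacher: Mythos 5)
Your proposal is correct and follows essentially the same route as the paper: both arguments identify the characteristic subgroups $\pi_1^3(G)$ and $\pi_1^r(G)$ by observing that the generators of the latter (closed walks stemming from cycles of length at most $r$) decompose into generators of the former via \cref{lem:FundGroupGenByTriangles}, and both handle the chordal case by running the same argument on $\pi_1(G)$ itself. Your worry about base walks is already absorbed into the statement of \cref{lem:FundGroupGenByTriangles}, which is phrased for closed walks based at a fixed $x_0$, so no extra conjugation argument is needed.
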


\begin{proof}
    As the characteristic subgroup $\pi_1^r(G)$ of $p_r$ is generated by the closed walks stemming from cycles of length at most $r$ and $G$ is $r$-chordal, \cref{lem:FundGroupGenByTriangles} yields that $\pi_1^r(G)$ is generated by the closed walks stemming from triangles, i.e.\ $\pi_1^3(G) = \pi_1^r(G)$.
    This shows the desired statement.

    The in-particular part is shown analogously by considering the fundamental group $\pi_1(G)$ itself instead of the $r$-local subgroup $\pi_1^r(G)$.
\end{proof}

\subsection{Algebraic characterization of (locally) chordal graphs}\label{subsec:binarycyclespace}

In this section, we prove the algebraic characterization of locally chordal graphs, which we restate here for convenience.

\algLocChordal*

\begin{proof}
    If $G$ is $r$-chordal, then \cref{lem:CyclesGenByTriangles} ensures that all its cycles of length at most $r$ are generated by triangles. 
    Conversely, assume that all cycles of $G$ of length at most $r$ are generated by triangles. Consider a cycle $O$ in $G$ of length at least $4$ and at most $r$.
    We show that $O$ has a chord.
    Since the cycles in $G$ of length at most $r$ are generated by triangles, there is a finite set of triangles whose symmetric difference is $O$.
    Fix $\cT$ to be a smallest such set. 
    If all three vertices of some triangle in $\cT$ are in $O$, then $O$ has a chord, as desired.
    Suppose for a contradiction that this is not the case.
    Let $T$ be a triangle in $\cT$.
    By assumption, there is a vertex $c$ of $T$ that is not on $O$. 

     Let $e_1, \hdots, e_q$ be the edges of $\bigcup_{T \in \cT} T$ incident with $c$, with endpoints $c$ and $v_1, \hdots, v_q$, respectively. Let~$H$ be the subgraph of $G$ with vertex set $\{v_1, \hdots, v_q\}$ and edge set consisting of edges $e$ with both endpoints in $\{v_1, \hdots, v_q\}$ such that $e$ is an edge of a triangle in $\cT$ containing $c$. 
     Since the symmetric difference of the triangles in $\cT$ is exactly $O$ and $c$ is not in $O$, it follows that every edge $e_1, \hdots, e_q$ disappears in the symmetric difference of $\cT$. Therefore, each edge $e_i$ is contained in at least two triangles of $\cT$. 
     Also every triangle in $\cT$ containing $e_i$ uses an edge from $H$. 
     Therefore, $H$ has minimum degree at least two, so $H$ contains a cycle~$C$.
    We may assume by reordering the $e_i$ and $v_i$ that $C = v_1v_2 \dots v_\ell v_1$.
    Then $C$ is the symmetric difference of the $\ell$ triangles $T_i$ formed by the edges $e_i,e_{i+1}, v_iv_{i+1}$, where $e_{\ell+1} = e_1$ and $v_{\ell +1}= v_1$.
    It follows from the  definition of $H$ that the $T_i$ are in $\cT$.

    If $G[V(C)]$ contains an induced cycle $C'$ of length at least four, then $G[V(C') \cup \{c\}]$ is a wheel with hub $c$, which is a contradiction.
    Hence, we may assume that $G[V(C)]$ is chordal. 
    Now, \cref{lem:CyclesGenByTriangles} yields a set $\cT^*$ of $\ell-2$ triangles in $G[V(C)]$ whose symmetric difference is $C$, and $\cT' \coloneqq (\cT \setminus \{T_1, \dots, T_\ell\}) \triangle \cT^*$ contradicts the choice of $\cT$, because $\cT'$ is smaller than $\cT$.
\end{proof}

We thank the user \href{https://math.stackexchange.com/users/1047163/kabel-abel}{kabel abel} on Mathematics Stack Exchange for their inspiring thoughts \cite{kabelabel} which we developed further to \cref{thm:chordalIFF-CSgenbyTriangles-No-Induced-wheels}. 
Since a graph is chordal if and only if it is wheel-free and $r$-chordal for every integer $r \geq 0$, an immediate corollary of \cref{thm:chordalIFF-CSgenbyTriangles-No-Induced-wheels} is the following algebraic characterization of chordal graphs, which was previously unknown.
It is an affirmative answer to the question asked by the author Jonas Kobler that was left unanswered in their discussion.

\algChordal*

As another consequence of \cref{thm:chordalIFF-CSgenbyTriangles-No-Induced-wheels}, we obtain the following characterization for the $r$-local cover of any graph being chordal:

\begin{theorem}\label{thm:CharOfLocCoverChordal}
    Let $G$ be a (possibly infinite) graph, and let $r \geq 3$ be an integer.
    Then the $r$-local cover $G_r$ of $G$ is chordal if and only if $G_r$ is $r$-chordal and wheel-free.
\end{theorem}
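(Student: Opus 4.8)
The plan is to reduce everything to the algebraic characterization of chordal graphs from \cref{cor:char-chordal-via-wheelfree-cyclespace}, exploiting the special feature of local covers that their binary cycle space is generated by short cycles. The whole argument is then a short chain of results already established in this section.

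The forward direction is immediate. If $G_r$ is chordal, then every cycle of length between $4$ and $r$ has a chord, so $G_r$ is $r$-chordal; and since the rim of any wheel $W_n$ with $n \geq 4$ is an induced cycle of length at least four, a chordal graph cannot contain a wheel as an induced subgraph, so $G_r$ is wheel-free.

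For the converse, suppose $G_r$ is $r$-chordal and wheel-free. By \cref{cor:char-chordal-via-wheelfree-cyclespace}, and since we already know $G_r$ is wheel-free, it suffices to show that the binary cycle space of $G_r$ is generated by its triangles. Here I would chain two facts. First, \cref{lem:BinaryCycleSpaceGenByShortCycles} tells us that the binary cycle space of the local cover $G_r$ is generated by its cycles of length at most $r$. Second, since $G_r$ is wheel-free and $r$-chordal, \cref{thm:chordalIFF-CSgenbyTriangles-No-Induced-wheels} applied to $G_r$ tells us that each of these short cycles is in turn generated by triangles. Combining the two, every generator of the cycle space is a symmetric difference of triangles, so the entire binary cycle space of $G_r$ is generated by its triangles, and \cref{cor:char-chordal-via-wheelfree-cyclespace} yields that $G_r$ is chordal.

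There is no genuine obstacle to overcome; the difficulty has already been absorbed into the earlier lemmas. If anything, the conceptual point worth flagging is \emph{why} this works specifically for local covers rather than arbitrary graphs: the bridge from the local hypothesis ($r$-chordality, which only controls cycles of length at most $r$) to the global conclusion (chordality) is exactly \cref{lem:BinaryCycleSpaceGenByShortCycles}. For a general wheel-free $r$-chordal graph, the combination of our hypotheses would only guarantee that the short cycles decompose into triangles and would say nothing about longer cycles; but for a local cover the short cycles already generate the whole cycle space, which is precisely what closes the gap.
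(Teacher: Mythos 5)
Your proposal is correct and follows essentially the same route as the paper: both directions chain \cref{lem:BinaryCycleSpaceGenByShortCycles} with the fact that short cycles in an $r$-chordal graph decompose into triangles, and then conclude via the wheel-free algebraic characterization. The only cosmetic difference is that you cite \cref{thm:chordalIFF-CSgenbyTriangles-No-Induced-wheels} and \cref{cor:char-chordal-via-wheelfree-cyclespace} where the paper cites \cref{lem:CyclesGenByTriangles} and \cref{thm:chordalIFF-CSgenbyTriangles-No-Induced-wheels} directly, which amounts to the same argument.
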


\begin{proof}
    As every induced subgraph of a chordal graph is chordal, the $r$-local cover $G_r$ is $r$-chordal and wheel-free, if it is chordal.
    Conversely, assume that the $r$-local cover $G_r$ is $r$-chordal and wheel-free.
    By \cref{lem:BinaryCycleSpaceGenByShortCycles}, the binary cycle space of the $r$-local cover $G_r$ is generated by its cycles of length at most $r$.
    \cref{lem:CyclesGenByTriangles} yields that any cycle of length at most $r$ in the $r$-chordal graph $G_r$ is generated by its triangles. 
    All in all, the binary cycle space of $G_r$ is generated by its triangles.
    Since $G_r$ is also wheel-free by assumption, it follows from \cref{thm:chordalIFF-CSgenbyTriangles-No-Induced-wheels} that $G_r$ is chordal.
\end{proof}

From \cref{thm:CharOfLocCoverChordal} we derive that statement \cref{basic:ii:sec} implies \cref{basic:iii:sec} in \cref{thm:FIRSTTHREE}:

\begin{proof}[Proof of \cref{thm:FIRSTTHREE}~\cref{basic:ii:sec}$\rightarrow$\cref{basic:iii:sec}]
    \phantomsection\label{proof:FIRSTTHREE:iiToiii}
    Assume that \cref{basic:ii:sec} holds, i.e.\ $G$ is $r$-chordal and wheel-free.
    We aim to show that \cref{basic:iii:sec} holds, i.e.\ $G_r$ is chordal. 
    By \cref{thm:CharOfLocCoverChordal}, it suffices to show that $G_r$ is also $r$-chordal and wheel-free.
    Indeed, any cycle in $G_r$ of length at most $r$ is contained in the $r/2$-ball around each of its vertices and any wheel in $G_r$ is contained in the $3/2$-ball around its hub.
    Since $p_r$ is $r/2$-ball-preserving, whenever these graphs occur as induced subgraphs in $G_r$, they would also occur in the corresponding $r/2$-ball in $G$ and thus in $G$ itself as induced subgraph.
    But $G$ is $r$-chordal and wheel-free by assumption.
    Hence, $G_r$ is also $r$-chordal and wheel-free, as desired.
\end{proof}

For the interested reader, we remark that one can prove that statement \cref{basic:ii} implies \cref{basic:i} in \cref{thm:FIRSTTHREE} directly, avoiding the use of the $r$-local cover, analogous to the above proof of \hyperref[proof:FIRSTTHREE:iiToiii]{\cref*{basic:ii}$\rightarrow$\cref*{basic:iii}}:
instead of applying \cref{lem:BinaryCycleSpaceGenByShortCycles}, one may use that the binary cycle space of any $r/2$-ball is generated by its cycles of length at most $r$.
This is immediate from the fact that, for any vertex $v$ of $G$, the fundamental cycles of a breadth-first search (BFS) tree of $B_{r/2}(v)$ rooted at $v$ have length at most $r$, since it is folklore that the fundamental cycles of any spanning tree of a graph generate its binary cycle space (e.g. \cite[Theorem~1.9.5~(i)]{bibel}).

\section{Characterization via minimal local separators}\label{sec:CharViaMinLocSep}

In this section, we prove the characterization of locally chordal graphs via minimal local separators, i.e.\ the equivalence of statements \cref{basic:i} and \cref{basic:iv} of \cref{BasicCharacterization}, which we restate here for convenience: 

\begin{customthm}{\cref*{BasicCharacterization}}\label{conj:local-separators}
    Let $G$ be a (possibly infinite) graph and let $r \geq 3$ be an integer. The following are equivalent:
    \begin{enumerate}
        \item \label{basic:i:third} $G$ is $r$-locally chordal.
        \setcounter{enumi}{3}
        \item \label{basic:iv:third}
        Every minimal $r$-local separator of $G$ is a clique.
    \end{enumerate}
\end{customthm}

\noindent This extends Dirac's characterization of chordal graphs via minimal separators (\cref{thm:CharacterisationChordalViaMinimalVertexSeparator}) to locally chordal graphs.

\subsection{Background: Local separators \texorpdfstring{\&}{and} components}
Let us explain the local analog of a separator introduced by Carmesin, Jacobs, Kurkofka and Knappe~\cite{computelocalSeps}.
For this, we first need to introduce their local analog of components.
Let $r \geq 0$ be an integer, and let $X$ be a vertex-subset of a graph $G$.
An \defn{$X$-walk} is a walk whose first and last vertices are in $X$ and which otherwise avoids $X$.
A walk in $G$ is \defn{$r$-local} if it is a walk in a cycle of length at most $r$ of $G$ or a walk traversing a single edge.
By \defn{$\partial X$}, we denote the \defn{edge boundary of $X$} in $G$, i.e.\ the set of all edges in $G$ with precisely one end vertex in $X$.
The \defn{$r$-local components at~$X$} in $G$ are the equivalence classes of the transitive closure of the relation $\sim_r$ on $\partial X$ defined by letting
\begin{center}
    \defn{$e\sim_r f$ at $X$ in $G$} $:$ $e$ and $f$ are the first and last edge on an $r$-local $X$-walk in~$G$, or $e=f$.
\end{center}
We remark that for $r \coloneqq \infty$, the $\infty$-local components at $X$ precisely correspond to the \defn{componental cuts at $X$}, i.e.\ the sets $E_G(C,X)$ consisting of the edges between $X$ and a component $C$ of $G-X$.
We say that $X$ is an \defn{$r$-local separator} of $G$ if there are at least two $r$-local components at $X$.

We extend the definitions regarding local separators from \cite{computelocalSeps} by describing what it means for two vertices to be ``locally separated'' and introduce ``minimal local separators.''
Let $u,w$ be two vertices of a graph $G$ of distance $2$.
Fix a vertex $v$ of $G$ with $uv,vw \in E(G)$.
A vertex-subset $X$ of $G$ \defn{$r$-locally separates $u$ and $w$} in $G$ with respect to $v$, and $X$ is an \defn{$r$-local $u$--$w$ separator} of $G$ with respect to $v$, if $X$ avoids $
\{u,w\}$, contains $v$, and $uv$ and $vw$ are in distinct $r$-local components at $X$.
A vertex-subset $X$ of $G$ is a \defn{minimal $r$-local separator} of $G$ if there are two vertices $u,w$ of distance $2$ in $G$ such that $X$ is an inclusion-minimal $r$-local $u$--$w$ separator of $G$ with respect to some vertex $v$ with $uv,vw \in E(G)$.

Note that if $r \geq 4$, then these definitions are independent of the choice of the vertex $v$ which witnesses that the distance between $u$ and $w$ is $2$.
We remark that $X \coloneqq V(G) \setminus \{u,w\}$ always forms an $r$-local $u$--$w$-separator of $G$ (with respect to any suitable $v$).

Every local $u$--$w$ separator with respect to $v$ is a (global) $u$--$w$ separator in the ball around $v$:

\begin{lemma}\label{lem:LocalSeparatorLocallySeparates}
    Let $G$ be a graph and $r \geq 0$ an integer. 
    Let $X$ be a vertex-subset of $G$ that $r$-locally separates two vertices $u$ and $w$ with respect to another vertex $v$ with $uv,vw \in E(G)$ in $G$.
    Then $X \cap N^{r/2}[v]$ is a $u$--$w$ separator of $B_{r/2}(v)$.
\end{lemma}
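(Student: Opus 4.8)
The plan is to argue by contraposition. Suppose $X\cap N^{r/2}[v]$ fails to separate $u$ and $w$ in $B_{r/2}(v)$; I will then produce a chain of boundary edges witnessing that $uv$ and $vw$ lie in the same $r$-local component at $X$, contradicting the hypothesis that $X$ $r$-locally separates $u$ and $w$ with respect to $v$. First I would dispose of the routine points: since $X$ avoids $\{u,w\}$ and both $u,w$ are at distance $1\le\lfloor r/2\rfloor$ from $v$ (as $r\ge 3$), the set $X\cap N^{r/2}[v]$ is contained in $V(B_{r/2}(v))\setminus\{u,w\}$, so it is a legitimate candidate separator. Moreover, every vertex of $B_{r/2}(v)$ lies in $N^{r/2}[v]$, so any $u$--$w$ path $P=p_0\cdots p_k$ in $B_{r/2}(v)$ avoiding $X\cap N^{r/2}[v]$ in fact avoids $X$ entirely.

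The heart of the argument is to transport a boundary edge along $P$ from $vu$ to $vw$ using only cycles of length at most $r$. The key quantitative input is the defining edge condition of the ball: for each edge $p_ip_{i+1}$ of $P$ one has $d(v,p_i)+d(v,p_{i+1})<r$, hence $d(v,p_i)+d(v,p_{i+1})\le r-1$ by integrality. Fixing shortest $v$--$p_i$ paths $Q_i$ in $G$ and concatenating $Q_i$, the edge $p_ip_{i+1}$, and $Q_{i+1}$ yields a closed walk through $v$ of length $d(v,p_i)+1+d(v,p_{i+1})\le r$, from which one extracts an honest cycle of length at most $r$. Such a cycle is $r$-local, so each of its maximal $X$-avoiding arcs is an $r$-local $X$-walk and therefore places its two end edges (which lie in $\partial X$) in a single $r$-local component. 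Tracking, for each $i$, the boundary edge $\beta_i$ at which a shortest route from $v$ to $p_i$ last leaves $X$, the arc through $p_ip_{i+1}$ should yield $\beta_i\sim_r\beta_{i+1}$ (or $\beta_i=\beta_{i+1}$). Since $u=p_0$ and $w=p_k$ are neighbours of $v\in X$, their shortest routes are the single edges $vu$ and $vw$, so $\beta_0=vu$ and $\beta_k=vw$; chaining the relations $\beta_0\sim_r\cdots\sim_r\beta_k$ then puts $vu$ and $vw$ in one $r$-local component, the desired contradiction. The short fundamental cycles of a BFS tree of $B_{r/2}(v)$ rooted at $v$ (noted at the end of \cref{sec:CharactizationIndSubCov} to have length at most $r$) furnish a convenient supply of these length-$\le r$ cycles.

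The main obstacle is precisely that $v$ itself lies in $X$, so shortest paths emanating from $v$ may immediately re-enter $X$ and may overlap one another; consequently the naive cycle $vuP wv$ can be far longer than $r$, and the bookkeeping of which boundary edge is being transported becomes delicate. Concretely, when $Q_i$ and $Q_{i+1}$ share a long prefix, the genuine short cycle one extracts is based at their deepest common vertex rather than at $v$, and its $X$-avoiding arc through $p_ip_{i+1}$ need not have $\beta_i$ and $\beta_{i+1}$ as its two end edges. Resolving this will require choosing the shortest paths coherently (for instance from a single BFS tree of $G$ rooted at $v$) and distinguishing cases according to whether the extracted short cycle meets $X$: if it does not, the two routes share their last exit from $X$ and $\beta_i=\beta_{i+1}$, while if it does, one must verify that the arc's end edges are exactly the last exits on the two branches. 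I expect this reconciliation of the per-step boundary edges along overlapping shortest paths to be the crux of the proof, whereas the length bound and the conversion of each short cycle into a $\sim_r$-relation are routine.
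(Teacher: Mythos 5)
You have the right strategy, and it is the same as the paper's: assume a $u$--$w$ path $P=p_0\cdots p_k$ in $B_{r/2}(v)$ avoids $X\cap N^{r/2}[v]$ (hence avoids $X$ altogether, since $V(B_{r/2}(v))\subseteq N^{r/2}[v]$), fix a BFS tree $T$ of the ball rooted at $v$ containing $uv$ and $vw$, use the edge condition $d(v,p_i)+d(v,p_{i+1})<r$ of the ball to attach to each edge of $P$ a closed walk through $v$ of length at most $r$, and chain boundary edges from $uv$ to $vw$. (The lemma is stated for all $r\ge 0$, but for $r\le 2$ it is trivial, so your restriction to $r\ge 3$ is harmless.) You have also correctly located the crux. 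However, the case analysis you propose for that crux does not close the argument, so the proposal has a genuine gap.

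Here is where it breaks. Let $z$ be the meet of $p_i$ and $p_{i+1}$ in $T$ and let $C$ be the cycle in $T+p_ip_{i+1}$, and suppose the branch from $z$ to $p_i$ avoids $X$ while the branch from $z$ to $p_{i+1}$ meets it. Then $\beta_i$, the last exit of the route $v\to p_i$ from $X$, lies on the common prefix above $z$ and is not an edge of $C$ at all, whereas $\beta_{i+1}$ lies on the $p_{i+1}$-branch. The maximal $X$-avoiding arc of $C$ through $p_ip_{i+1}$ therefore has end edges $\beta_{i+1}$ and the $\partial X$-edge of the $p_{i+1}$-branch nearest to $z$ --- not $\beta_i$ --- so the intended relation $\beta_i\sim_r\beta_{i+1}$ is not witnessed by $C$, and choosing the shortest paths coherently from one BFS tree does not eliminate this configuration. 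The paper's repair, which is the idea missing from your plan, is to work with the entire fundamental closed walk based at $v$ (tree path down to one endpoint, the path edge, tree path back up to $v$), which still has length at most $r$ by the same edge condition, and to take as the two tracked edges the last, respectively first, $\partial X$-edge on this closed walk before, respectively after, the path edge. With this bookkeeping the matching of consecutive steps is automatic --- the ``first exit after'' for the step at $p_ip_{i+1}$ equals the ``last exit before'' for the step at $p_{i+1}p_{i+2}$, both being the last $\partial X$-edge on the tree path from $v$ to $p_{i+1}$ --- and the subwalk of the closed walk between these two edges (rather than an arc of $C$) is the $r$-local $X$-walk giving the $\sim_r$-relation. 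Your first case ($C$ disjoint from $X$, giving $\beta_i=\beta_{i+1}$) and the case where both branches below $z$ meet $X$ are fine; it is precisely the asymmetric case that forces the switch from the cycle to the closed walk based at $v$.
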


\begin{proof}
    Suppose for a contradiction that there is a $u$--$w$ path $P = v_0e_1v_1 \dots e_\ell v_\ell$ in $B_{r/2}(v)$ avoiding $X$.
    Fix a BFS tree $T$ of $B_{r/2}(v)$ rooted at $v$ that contains the edges $uv$ and $vw$.
    Let $i \in [\ell]$.
    If $e_i$ is in $T$, then let both $f_i^-, f_i^+$ be the last edge on the (unique) inclusion-minimal path in $T$ starting at $v$ and containing $e_i$.
    Otherwise,
    let $W_i$ be the fundamental closed walk based at $v$ through $e_i$ with respect to $T$ that traverses $e_i$ from $v_{i-1}$ to $v_i$.\footnote{This is the unique reduced closed walk based at $v$ which is entirely in $T+e_i$ but it traverses $e_i$ precisely once and that is from $v_{i-1}$ to $v_i$; we refer the reader to \cite[\S 4.1]{canonicalGD} for more details.}
    Let $f^-_i$ be the last edge on $W_i$ before $e_i$ which is in $\partial X$.
    Let $f^+_i$ be the first edge on $W_i$ after $e_i$ which is in $\partial X$.
    Note that $f^-_{i+1} = f^+_i$ for $i \in [\ell - 1]$.
    Since $T$ contains $uv,vw$, we have $f^-_1 = uv$ and $f^+_{\ell} = vw$.
    As $T$ is a BFS tree of $B_{r/2}(v)$ rooted at $v$, the $W_i$ have length at most $r$.
    Thus, either $f_i^- = f_i^+$ or the subwalk of $W_i$ that starts with $f_i^-$ and ends with $f_i^+$ is an $r$-local $X$-walk.
    Hence, $f^-_i \sim_r f^+_i$ at $X$ for every $i \in [\ell]$.
    All in all, $f^-_1 = uv$ and $f^+_{\ell} = vw$ are in the same $r$-local component at $X$, which contradicts that $X$ $r$-locally separates $u$ and $w$.
\end{proof}

\subsection{Proof of the characterization via minimal local separators}

Now we show \hyperref[proof:FIRSTTHREE:iiiToiv]{\cref*{BasicCharacterization}~\cref*{basic:iii}$\rightarrow$\cref*{basic:iv}} and \hyperref[proof:FIRSTTHREE:ivToii]{\cref*{BasicCharacterization}~\cref*{basic:iv}$\rightarrow$\cref*{basic:ii}}.
Together with the equivalence of the first three statements \cref{basic:i}, \cref{basic:ii} and \cref{basic:iii} of \cref{BasicCharacterization}, which we have shown in the previous section, this yields the desired characterization via minimal local separators, \cref{conj:local-separators}~\cref{basic:i:third}$\leftrightarrow$\cref{basic:iv:third}.

\begin{proof}[Proof of \cref{BasicCharacterization}~\cref{basic:iv}$\rightarrow$\cref{basic:ii}]
\phantomsection\label{proof:FIRSTTHREE:ivToii}
    Assume that \cref{basic:iv} holds.
    We claim that $G$ contains neither a wheel $W_n$ with $n \geq 4$ nor a cycle of length at least $4$ and at most $r$ as an induced subgraph.
    Suppose for a contradiction that $G$ contains a cycle $O$ of length at least $4$ but at most $r$ (analogously: contains a wheel $W_n$ with $n \geq 4$ and whose rim we denote by $O$) as an induced subgraph.
    Let $u,v,w$ be three consecutive vertices on $O$ (analogously: let $u,w$ be two non-adjacent vertices on the rim $O$ and let $v$ be the hub of wheel). 
    In particular, $u,w$ have distance $2$ and $uv,vw \in E(G)$.
    Let $X$ be an inclusion-minimal $r$-local $u$--$w$ separator in $G$.
    Since $uv$ and $vw$ are in distinct $r$-local components at $X$, the short cycle $O$ meets $X$ in at least one vertex $x$ other than $v$ (analogously: each $P_i$ of the two $u$--$w$ paths $P_1,P_2$ in the rim $O$ meets $X$ in at least one vertex $x_i$). 
    Thus, $X$ is a minimal $r$-local separator in $G$, which contains two non-adjacent vertices $v,x$ (analogously: $x_1,x_2$) of $G$.
    This is a contradiction, since $X$ is a clique by \cref{basic:iv}.
\end{proof}

\begin{proof}[Proof of \cref{BasicCharacterization}~\cref{basic:iii}$\rightarrow$\cref{basic:iv}]
\phantomsection\label{proof:FIRSTTHREE:iiiToiv}

    Assume that \cref{basic:iii} holds.
    Let $X$ be a minimal $r$-local $u$--$w$ separator in $G$ with respect to a vertex $v$ with $uv,vw \in E(G)$.
    By \cref{lem:LocalSeparatorLocallySeparates}, $X \cap N^{r/2}[v]$ is a $u$--$w$ separator of $B_{r/2}(v)$.

    Let $Y$ be a minimal $u$--$w$ separator of $B_{r/2}(v)$.
    Since $G$ is $r$-locally chordal, the ball $B_{r/2}(v)$ is chordal, so $Y$ is a clique by \cref{thm:CharacterisationChordalViaMinimalVertexSeparator}.
    Thus, it suffices to show that $Y$ is an $r$-local $u$--$w$ separator in $G$ with respect to $v$: the minimality of $X$ then yields that $X \cap N^{r/2}[v] = Y$, which is a clique.
    
    Let $\hat Y$ be a lift of the clique $Y$ to $G_r$, and let $\hat u$ and $\hat w$ be the respective lifts of $u$ and $w$ incident to the unique lift $\hat v$ of $v$ in $\hat Y$.
    Since $\hat Y$ is an $r$-locally closed lift of the vertex-subset $Y$ to $G_r$, it follows from \cite[Projection~4.16]{computelocalSeps} that it remains to show that $\hat Y$ is an $r$-local $\hat u$--$\hat w$ separator of $G_r$.
    For this, it suffices to ensure that $\hat Y$ is an $\hat u$--$\hat w$ separator of $B_{G_r}(\hat Y,r/2)$.
    
    So suppose for a contradiction that there is a $\hat u$--$\hat w$ path $\hat P$ in $B_{G_r}(\hat Y,r/2)$ avoiding $\hat Y$.
    Since $p_r$ preserves $r/2$-balls, the vertices $\hat u$ and $\hat v$ live in distinct components of $B_{G_r}(\hat v,r/2) - \hat Y$.
    Thus, $\hat P$ contains a $B_{G_r}(\hat v,r/2)$-path through $N^{r/2}_{G_r}[\hat Y] \setminus N^{r/2}_{G_r}[\hat v]$ that joins two distinct components of $B_{G_r}(\hat v,r/2) - \hat Y$.
    Let $\hat Q$ be a shortest such path, and let $\hat R$ be a shortest path through $N^{r/2-1}_{G_r}[\hat v]$ joining the endvertices $\hat x, \hat y$ of $\hat Q$.
    Since the vertices $\hat x$ and $\hat y$ are in distinct components of $B_{G_r}(v, r/2) - \hat Y$, there is no edge between $\hat x$ and $\hat y$ in $G_r$.
    Hence, $\hat Q \cup \hat R$ forms an induced cycle of length at least $4$, which contradicts that $B_{G_r}(\hat Y, r/2)$ is chordal by \cref{lem:ChordalIsLocallyChordalForCliques} and \cref{basic:iii}.

\end{proof}

\subsection{Comparison: Minimal vs.\ tight local separators}

We conclude by highlighting a difference between minimal local separators and minimal separators. 
A vertex-subset $X$ of a graph $G$ is a \defn{tight} separator of $G$ if there are at least two components~$C$ of~$G-X$ are \defn{full}, i.e.\ $N_G(C) = X$.\footnote{Full components are sometimes also called {\em tight components}.}
It is easy to see that a separator $X$ of $G$ is tight if and only if it is minimal.

In \cite{computelocalSeps}, the authors also introduced a local analog of tight separator.
To define it here, we first need to introduce their local analog of full components.
An $r$-local component~$F$ at $X$ is \defn{full} if every vertex in~$X$ is incident to some edge in~$F$.
Moreover, an $r$-local separator $X$ of $G$ is \defn{tight} if there are at least two full $r$-local components at $X$ in $G$.
It is easy to see that every minimal local separator is tight, but in contrast to the case of global separators, not every tight local separator is minimal.
Also, not every tight local separator of a locally chordal graph is a clique:

\begin{example}\label{example:TightNotClique}
    The graph depicted in \cref{fig:TightNotClique} is $r$-locally chordal graph for $r=3$ and has a tight $r$-local separator that is not a clique.
\end{example}

\begin{figure}[ht]
    \centering
    \includegraphics[width=0.2\textwidth]{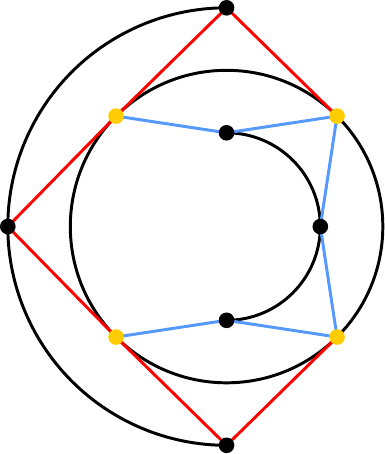}
    \caption{
    A tight $r$-local separator (yellow) in an $r$-locally chordal graph that is not a clique for $r = 3$.}
    \label{fig:TightNotClique}
\end{figure}

\begin{proof}
    Let $G$ be the graph depicted in \cref{fig:TightNotClique}.
    Each $3/2$-ball of $G$ is chordal, so $G$ is $3$-locally chordal.
    The red and blue edges, respectively, form full $3$-local components at the set $X$ of yellow vertices, but $X$ is not a clique in $G$.
\end{proof}

\section{Future work}

We can define ``local'' versions of any graph class in the same way that we defined locally chordal graphs: for a graph class $\mathcal{C}$, a graph $G$ is \defn{$r$-locally $\mathcal{C}$} if $B_G(v, r/2) \in \mathcal{C}$ for every vertex $v$ of $G$. 
Note that $r$-locally chordal is precisely $r$-locally $\cC$ for the class $\cC$ of chordal graphs.
Which other local versions of graph classes have nice characterizations? We are especially interested in when local versions of graph classes can be characterized by the local cover:

\begin{question}
    Which graph classes $\cC$ satisfy that a graph $G$ is $r$-locally $\cC$ if and only if its $r$-local cover $G_r$ is $\cC$?
\end{question}
Observe that if a graph class $\cC$ is closed under taking $r/2$-balls, i.e. for all $G \in \cC$ and $v \in G$ the $r/2$-balls $B_G(v, r/2)$ are in $\cC$, then $G_r \in \cC$ implies that $G$ is $r$-locally $\cC$.

One of the most common uses of ``local'' in the graph theory literature is to call graphs with no short cycles ``locally tree-like.'' This turns out to be a simple example of a local class with nice characterizations. A graph is \defn{$r$-locally acyclic} if each of its $r/2$-balls is acyclic (equivalently: a tree). The following is immediate from the definitions: 
\begin{theorem}
    Let $G$ be a graph and $r \geq 0$ an integer. The following are equivalent. 
    \begin{enumerate}
        \item $G$ is $r$-locally acyclic. 
        \item $G$ has girth greater than $r$. 
        \item $G_r$ is acyclic (equivalently: $G_r$ is a forest). 
    \end{enumerate}
\end{theorem}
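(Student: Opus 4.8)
The plan is to prove the cyclic chain of implications (1) $\Rightarrow$ (2) $\Rightarrow$ (3) $\Rightarrow$ (1), leaning on two facts already used repeatedly in the paper. The first is that any cycle of $G$ of length at most $r$ is contained in the $r/2$-ball $B_{r/2}(v)$ around each of its vertices $v$ (one checks that every vertex of such a cycle has distance at most $r/2$ from $v$ and every edge satisfies the boundary condition). The second is that the $r$-local covering $p_r$ is $r/2$-ball-preserving, so that $B_{r/2}(v)$ in $G$ is isomorphic to $B_{G_r}(\hat v, r/2)$ for any lift $\hat v$, and conversely every $r/2$-ball of $G_r$ projects isomorphically to one of $G$.

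For (1) $\Rightarrow$ (2) I would argue contrapositively: if $G$ contained a cycle $O$ of length at most $r$, then $O$ would lie inside $B_{r/2}(v)$ for any vertex $v$ of $O$, so that ball would fail to be acyclic. Hence $r$-local acyclicity forces $G$ to have no cycle of length at most $r$, i.e.\ girth greater than $r$. For (3) $\Rightarrow$ (1) I would use ball-preservation directly: fixing a vertex $v$ of $G$ and a lift $\hat v$, the ball $B_{r/2}(v)$ is isomorphic to $B_{G_r}(\hat v, r/2)$, which is a subgraph of the forest $G_r$ and therefore acyclic; thus every $r/2$-ball of $G$ is acyclic.

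The step I expect to carry the real content is (2) $\Rightarrow$ (3). First I would transfer the girth hypothesis to the cover: a cycle of length at most $r$ in $G_r$ would lie in some $r/2$-ball of $G_r$, which $p_r$ maps isomorphically onto an $r/2$-ball of $G$, producing a cycle of the same length in $G$ and contradicting girth greater than $r$; hence $G_r$ too has no cycle of length at most $r$. Now I would invoke \cref{lem:BinaryCycleSpaceGenByShortCycles}: the binary cycle space $\cZ(G_r)$ is generated by the cycles of $G_r$ of length at most $r$. Since there are none, $\cZ(G_r)$ is trivial, and a graph with trivial binary cycle space has no cycles at all, so $G_r$ is acyclic.

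The only genuinely non-obvious point is ruling out \emph{long} cycles in the (possibly infinite) cover $G_r$, and this is exactly what \cref{lem:BinaryCycleSpaceGenByShortCycles} is designed to supply; every other implication reduces to the two transfer facts above, which is why the statement can fairly be called immediate from the definitions.
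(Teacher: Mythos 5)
Your proof is correct. The paper offers no argument for this theorem --- it is stated with the remark that it is ``immediate from the definitions'' --- so there is nothing to compare line by line; your write-up supplies exactly the details the authors elide. You correctly identify that the only step with content is (2)~$\Rightarrow$~(3), and invoking \cref{lem:BinaryCycleSpaceGenByShortCycles} (no short cycles in $G_r$, hence trivial binary cycle space, hence no cycles at all) is a clean way to close it. An alternative, arguably closer to what the authors mean by ``immediate from the definitions,'' is to observe that if $G$ has girth greater than $r$ then the $r$-local subgroup of the fundamental group --- which is generated by closed walks stemming from cycles of length at most $r$ --- is trivial, so $p_r$ is the universal covering and $G_r$ is a forest; this bypasses the cycle-space lemma but uses the original group-theoretic definition of $G_r$ rather than the ball-preservation characterization the rest of the paper works with. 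Either route is fine, and your transfer arguments for (1)~$\Rightarrow$~(2) and (3)~$\Rightarrow$~(1) via containment of short cycles in balls and $r/2$-ball-preservation are the same two facts the paper uses repeatedly elsewhere.
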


Both chordal graphs and acyclic graphs have neat descriptions in terms of tree-decompositions: chordal graphs are precisely the graphs with tree-decompositions into cliques, and acyclic graphs are precisely the graphs of treewidth at most $1$. 
Graphs of treewidth at most $2$ also have a nice structural characterization. A graph is \defn{series-parallel} if it does not contain a $K_4$ minor. A graph is series-parallel if and only if it has treewidth at most 2. A graph is \defn{$r$-locally series-parallel} if each of its $r/2$-balls is series-parallel. 

Observe that $K_4$ is very similar to a wheel: it is a vertex $v$ complete to a cycle $C_3$. An \defn{$r$-local subdivision} of a wheel or of a $K_4$ is a subdivision $H$ of a wheel or a $K_4$ such that $H$ is contained in $B_H(v, r/2)$ for some vertex $v$ of $H$. We conjecture that locally series-parallel graphs admit a characterization similar to that of locally chordal graphs and locally acyclic graphs: 

\begin{conjecture}\label{conj:seriesparallel}
    Let $G$ be a (possibly infinite) graph and let $r \geq 3$ be an integer.
    Then the following are equivalent:
    \begin{enumerate}
        \item $G$ is $r$-locally series-parallel.
        \item $G$ does not contain an $r$-local subdivision of a wheel or of a $K_4$.
        \item $G_r$ is series-parallel.
    \end{enumerate}
\end{conjecture}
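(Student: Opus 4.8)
The plan is to mirror the architecture of \cref{BasicCharacterization}, proving the two ``easy'' implications of \cref{conj:seriesparallel} directly and closing the cycle through the single hard implication \emph{(ii)}$\rightarrow$\emph{(iii)}. The decisive simplification compared with the chordal case is that series-parallelity, being defined by the forbidden \emph{minor} $K_4$, is minor-closed and hence closed under subgraphs; this collapses the role played by the genuinely nontrivial \cref{lem:ChordalIsLocallyChordalForCliques} (``chordal graphs have chordal balls'') into a triviality, since every ball is a subgraph. First I would record the two structural facts used throughout: every subdivision of a wheel $W_n$ $(n\geq 3)$ or of $K_4$ contains a $K_4$ minor (contract rim edges, then suppress subdivision vertices), and an $r$-local subdivision $H$ of such a graph, being $r$-locally concentrated around some $u\in H$, satisfies $B_H(u,r/2)\subseteq B_G(u,r/2)$ because $G$-distances never exceed $H$-distances; thus $H\subseteq B_G(u,r/2)$.

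For \emph{(iii)}$\rightarrow$\emph{(i)}: if $G_r$ is series-parallel, then since $p_r$ is $r/2$-ball-preserving every $B_G(v,r/2)$ is isomorphic to a ball $B_{G_r}(\hat v,r/2)$, which is a subgraph of the series-parallel graph $G_r$ and hence series-parallel; so $G$ is $r$-locally series-parallel. For \emph{(i)}$\rightarrow$\emph{(ii)}: if $G$ contained an $r$-local subdivision $H$ of a wheel or of $K_4$, then by the remark above $H\subseteq B_G(u,r/2)$, and as $H$ carries a $K_4$ minor, minor-monotonicity forces $B_G(u,r/2)$ to be non-series-parallel, contradicting \emph{(i)}. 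Both of these follow the same two-line template as the corresponding directions in \cref{thm:FIRSTTHREE}.

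The crux is \emph{(ii)}$\rightarrow$\emph{(iii)}. I would first transfer the hypothesis to the cover exactly as in \hyperref[proof:FIRSTTHREE:iiToiii]{\cref*{basic:ii:sec}$\rightarrow$\cref*{basic:iii:sec}}: since $p_r$ preserves $r/2$-balls, any $r$-local wheel-or-$K_4$ subdivision in $G_r$ would project isomorphically into a ball of $G$, so $G_r$ also has no $r$-local subdivision of a wheel or of $K_4$. By \cref{lem:BinaryCycleSpaceGenByShortCycles} the binary cycle space of $G_r$ is generated by its cycles of length at most $r$. It then suffices to prove the self-contained statement that a graph $\Gamma$ whose cycle space is generated by cycles of length $\leq r$ and which has no $r$-local subdivision of a wheel or of $K_4$ must be series-parallel. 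I would argue by contradiction: a non-series-parallel $\Gamma$ contains a $K_4$ subdivision, equivalently a cycle $C$ together with two internally disjoint paths $P_1,P_2$ whose endpoints lie on $C$ in interleaved (``crossing'') cyclic order. Choosing such a configuration of minimum total length, I would write the cycles it determines as symmetric differences of short ($\leq r$) cycles, and use these short generators to replace long arcs of $C$ and long stretches of the $P_i$ by short substitutes, thereby pulling the whole crossing configuration into a single $r/2$-ball. The intended dichotomy parallels the minimal-triangle-set argument in \cref{thm:chordalIFF-CSgenbyTriangles-No-Induced-wheels}: either the localized configuration is itself an $r$-local $K_4$ subdivision, or the participating short cycles share a common vertex that plays the role of a hub, yielding an $r$-local wheel subdivision; either outcome contradicts the transferred hypothesis. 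This also explains why \emph{both} obstructions must be forbidden in \emph{(ii)}, just as ``$r$-chordal'' and ``wheel-free'' both appear in \cref{BasicCharacterization}: a $K_4$ minor inside a ball need not be witnessed by an $r$-locally concentrated $K_4$ subdivision, and the localization may instead collapse a branch set to a hub.

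The main obstacle is making this localization rigorous, and it is exactly why the statement remains a conjecture rather than a theorem. In the chordal proof the generators are triangles and the target property (a chord, or an induced wheel with apex $c$) is an \emph{induced-subgraph} condition that is controlled cleanly by symmetric differences of triangles, as in \cref{lem:CyclesGenByTriangles}. Here the target is a \emph{topological-minor} condition, which the binary cycle space does not detect and which is not monotone under symmetric difference in any obviously usable way, so the substitution steps above cannot be read off from cycle-space bookkeeping alone. What is really needed is a series-parallel analog of \cref{lem:CyclesGenByTriangles} and \cref{thm:chordalIFF-CSgenbyTriangles-No-Induced-wheels} --- a lemma guaranteeing that, once the cycle space is generated by short cycles, any crossing-paths witness of a $K_4$ minor can be redrawn to be $r$-locally concentrated (as a $K_4$ or a wheel). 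Formulating and proving such a statement, and in particular verifying that the hub alternative genuinely arises precisely when a direct small $K_4$ subdivision cannot be found, is the essential difficulty; I expect it to require a careful minimal-counterexample analysis of how crossing chord-paths interact with a short-cycle basis, most likely organized through treewidth-$2$ / $K_4$-minor structure theory rather than through the cycle space directly.
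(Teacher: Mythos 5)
The statement you are proving is not proven in the paper at all: it is stated as \cref{conj:seriesparallel} and explicitly deferred to future work, so there is no proof of record to compare against. Your treatment of the two routine implications is correct and is exactly what one would write. For \emph{(iii)}$\rightarrow$\emph{(i)} you correctly exploit that series-parallelity (absence of a $K_4$ minor) is subgraph-closed, so $r/2$-ball-preservation of $p_r$ immediately transfers the property from $G_r$ to every ball of $G$; and for \emph{(i)}$\rightarrow$\emph{(ii)} the observation that $B_H(u,r/2)\subseteq B_G(u,r/2)$ for a subgraph $H$ (since $G$-distances are at most $H$-distances, for vertices and for the edge condition alike) correctly places any $r$-local subdivision inside a single ball, where its $K_4$ minor contradicts \emph{(i)}. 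Your transfer of hypothesis \emph{(ii)} from $G$ to $G_r$ via ball-preservation is also sound, because the defining containment $H\subseteq B_H(v,r/2)$ of an $r$-local subdivision is intrinsic to $H$ and hence preserved under the isomorphism of balls.

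The genuine gap is the core implication \emph{(ii)}$\rightarrow$\emph{(iii)}, which you describe as a plan rather than prove, and which is precisely the content that makes the statement a conjecture. Your proposed strategy---reduce to showing that a graph whose binary cycle space is generated by cycles of length at most $r$ and which has no $r$-local subdivision of a wheel or of $K_4$ is series-parallel, then localize a minimal crossing-paths witness of a $K_4$ minor using the short generators---is a plausible analog of the proof of \cref{thm:chordalIFF-CSgenbyTriangles-No-Induced-wheels}, but the decisive substitution step is not carried out. As you yourself note, the obstruction is real: a $K_4$ minor is a topological-minor condition that the cycle space does not see, and symmetric differences of short cycles do not obviously preserve or produce the interleaved-paths configuration, so the dichotomy ``small $K_4$ subdivision or wheel-like hub'' is asserted, not established. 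Until a series-parallel counterpart of \cref{lem:CyclesGenByTriangles} is formulated and proven, the argument does not close, and the statement remains exactly where the paper leaves it.
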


 We aim to prove \cref{conj:seriesparallel} in a future work \cite{LocallySeriesParallel}.

\printbibliography

\end{document}